\let\old@makechapterhead\@makechapterhead
\def\fake@makechapterhead#1{%
	\vspace*{50\p@}%
	{\parindent \z@ \raggedright \normalfont
		\ifnum \c@secnumdepth >\m@ne
		\huge\bfseries \strut
		\par\nobreak
		\vskip 20\p@
		\fi
		\interlinepenalty\@M
		\Huge \bfseries #1\par\nobreak
		\vskip 40\p@
}}
\newcommand{\newchapterhead}{\let\@makechapterhead\fake@makechapterhead}
\newcommand{\restorechapterhead}{\let\@makechapterhead\old@makechapterhead}
\newlength\mylen
\renewcommand\cftpartpresnum{Part~}
\newtheorem{theorem}{Theorem}[section]
\newcommand{\red}[1]{{\color{red}#1}}
\g@addto@macro{\UrlBreaks}{\UrlOrds}
\theoremstyle{definition}
\newtheorem{thm}[theorem]{Theorem}
\newtheorem{lem}[theorem]{Lemma}
\newtheorem{cor}[theorem]{Corollary}
\newtheorem{obs}[theorem]{Observation}
\newtheoremstyle{named}{}{}{\itshape}{}{\bfseries}{.}{.5em}{\thmnote{#3's }#1}
\theoremstyle{named}
\newcommand{\bfDelta}{\boldsymbol{\Delta}}
\renewcommand{\epsilon}{\varepsilon}
\numberwithin{equation}{section}
\title{Reals in the Matet and Willow models}
\author{Raiean Banerjee}
\date{}
\begin{document}

\maketitle

\section{Introduction}
We visit Brendle's \emph{Uniform Forcings Diagram} from \cite[page 6]{brendle1995}:
\begin{equation*}
\xymatrix@R=1mm@C=1mm{
\mathbb{S} & \supseteq & \mathbb{M} & \supseteq & \mathbb{L}\\
\rotatebox[origin=c]{270}{$\supseteq$} & &
\rotatebox[origin=c]{270}{$\supseteq$} & &
\\
\mathbb{W} & \supseteq & \mathbb{T} &  & \rotatebox[origin=c]{270}{$\supseteq$}\\
\rotatebox[origin=c]{270}{$\supseteq$} & & & \rotatebox[origin=c]{315}{$\supseteq$}\\
\mathbb{V} & & \supseteq & &  \mathbb{R}
}\end{equation*}
The inclusions in this diagram immediately give rise to implications between the corresponding
regularity properties
\begin{equation*}
\xymatrix@R=3mm@C=3mm{
\bfDelta^1_2(\mathbb{S})\ar@{<=}[rr] &  & \bfDelta^1_2(\mathbb{M})\ar@{<=}[rr] &  & \bfDelta^1_2(\mathbb{L})\\
 & & & &
\\
\bfDelta^1_2(\mathbb{W})\ar@{=>}[uu] & & \bfDelta^1_2(\mathbb{T})\ar@{=>}[ll]\ar@{=>}[uu] &  & \\
 & & & \\
\bfDelta^1_2(\mathbb{V})\ar@{=>}[uu] & &  & &  \bfDelta^1_2(\mathbb{R})\ar@{=>}[uuuu]\ar@{=>}[uull]
\ar@{=>}[llll]
}\end{equation*}
which we shall call the \emph{Uniform Regularities Diagram}.
We believe that the Uniform Regularities Diagram is complete in the sense that the implications marked in the diagram are the only ones that exist. The status of the subdiagram with Matet and Willowtree forcing removed was known, i.e., that the diagram \begin{equation*}
\xymatrix@R=8mm@C=5mm{
\bfDelta^1_2(\mathbb{S})\ar@{<=}[r]  & \bfDelta^1_2(\mathbb{M})\ar@{<=}[r]  & \bfDelta^1_2(\mathbb{L})\\
 & &  \\
\bfDelta^1_2(\mathbb{V})\ar@{=>}[uu] & & \bfDelta^1_2(\mathbb{R})\ar@{=>}[uu]
\ar@{=>}[ll]
}\end{equation*}
is complete in the above sense. We should like to emphasise that 
one component of this completeness is 
the fact that 
$\bfDelta^1_2(\mathbb{L})$ does not imply 
$\bfDelta^1_2(\mathbb{V})$ which was proved in \red{cite the paper}.

\begin{obs}\label{obs:diagramcompleteness}
The Uniform Regularities Diagram will be complete if the following non-implications hold:
\begin{enumerate}[(a)]\setlength{\itemsep}{0mm}
  \item $\bfDelta^1_2(\mathbb{T})\not\Rightarrow\bfDelta^1_2(\mathbb{V})$,
  \item $\bfDelta^1_2(\mathbb{T})\not\Rightarrow\bfDelta^1_2(\mathbb{L})$, and
  \item $\bfDelta^1_2(\mathbb{L})\not\Rightarrow\bfDelta^1_2(\mathbb{W})$.
  \end{enumerate}

\end{obs}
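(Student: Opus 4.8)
The plan is to read this Observation as a purely propositional diagram chase: once the nine marked implications are in place and the subdiagram on $\{\mathbb{S},\mathbb{M},\mathbb{L},\mathbb{V},\mathbb{R}\}$ is granted complete, ``completeness of the full diagram'' is exactly the assertion that every ordered pair $(\mathbb{X},\mathbb{Y})$ admitting no directed path $\mathbb{X}\to\mathbb{Y}$ is a genuine non-implication. Throughout I abbreviate $\bfDelta^1_2(\mathbb{X})\Rightarrow\bfDelta^1_2(\mathbb{Y})$ by $\mathbb{X}\Rightarrow\mathbb{Y}$. The single engine of the argument is the fact that non-implications propagate against the arrows: if $\mathbb{A}'\Rightarrow\mathbb{A}$ and $\mathbb{B}\Rightarrow\mathbb{B}'$ are marked or already-derived implications and $\mathbb{A}'\not\Rightarrow\mathbb{B}'$, then $\mathbb{A}\not\Rightarrow\mathbb{B}$, since a model witnessing $\mathbb{A}'\wedge\neg\mathbb{B}'$ also witnesses $\mathbb{A}\wedge\neg\mathbb{B}$ by monotonicity in both coordinates.

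First I would fix the two permitted ingredients. On the implication side I take the marked arrows together with their transitive closure, so that $\mathbb{L}\Rightarrow\mathbb{S}$, $\mathbb{T}\Rightarrow\mathbb{S}$, $\mathbb{V}\Rightarrow\mathbb{S}$, and $\mathbb{R}$ implies every property; reading off reachability, $\mathbb{T}$ reaches exactly $\{\mathbb{T},\mathbb{W},\mathbb{M},\mathbb{S}\}$ and $\mathbb{W}$ reaches exactly $\{\mathbb{W},\mathbb{S}\}$. On the non-implication side I take the twelve non-implications among the old nodes furnished by completeness of the subdiagram, of which $\mathbb{L}\not\Rightarrow\mathbb{V}$, $\mathbb{V}\not\Rightarrow\mathbb{M}$ and $\mathbb{S}\not\Rightarrow\mathbb{M}$ carry most of the load. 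A quick count shows there are exactly twenty-seven non-reachable ordered pairs to certify ($42$ pairs with $\mathbb{X}\neq\mathbb{Y}$, minus $15$ reachable ones).

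Then I would run the chase, each line a single use of the propagation rule. The non-implications landing in the two new columns come from (c) and the subdiagram: $\mathbb{S}\not\Rightarrow\mathbb{W}$ and $\mathbb{M}\not\Rightarrow\mathbb{W}$ are (c) pushed along $\mathbb{L}\Rightarrow\mathbb{S}$ and $\mathbb{L}\Rightarrow\mathbb{M}$; $\mathbb{L}\not\Rightarrow\mathbb{W}$ is (c) verbatim while $\mathbb{L}\not\Rightarrow\mathbb{T}$ is (c) pushed through $\mathbb{T}\Rightarrow\mathbb{W}$; and $\mathbb{S}\not\Rightarrow\mathbb{T}$, $\mathbb{M}\not\Rightarrow\mathbb{T}$ come from $\mathbb{S}\not\Rightarrow\mathbb{M}$ and $\mathbb{M}\not\Rightarrow\mathbb{W}$ through $\mathbb{T}\Rightarrow\mathbb{M}$ and $\mathbb{T}\Rightarrow\mathbb{W}$. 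The non-implications out of the Willowtree column are the crux: routing $\mathbb{V}\not\Rightarrow\mathbb{M}$ through $\mathbb{V}\Rightarrow\mathbb{W}$ gives $\mathbb{W}\not\Rightarrow\mathbb{M}$, whence $\mathbb{W}\not\Rightarrow\mathbb{L}$, $\mathbb{W}\not\Rightarrow\mathbb{T}$, $\mathbb{W}\not\Rightarrow\mathbb{R}$ follow by pushing through $\mathbb{L}\Rightarrow\mathbb{M}$, $\mathbb{T}\Rightarrow\mathbb{M}$, $\mathbb{R}\Rightarrow\mathbb{L}$, and the remaining $\mathbb{W}\not\Rightarrow\mathbb{V}$ requires the genuinely new input (a), got by routing $\mathbb{T}\not\Rightarrow\mathbb{V}$ through $\mathbb{T}\Rightarrow\mathbb{W}$. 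Finally $\mathbb{T}\not\Rightarrow\mathbb{R}$ is (b) through $\mathbb{R}\Rightarrow\mathbb{L}$, and $\mathbb{V}\not\Rightarrow\mathbb{T}$ is $\mathbb{V}\not\Rightarrow\mathbb{M}$ through $\mathbb{T}\Rightarrow\mathbb{M}$; all further pairs lie inside the old subdiagram and are already known.

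The step I expect to demand the most care is the bookkeeping rather than any individual deduction: checking that the reachability sets are computed correctly, that the twenty-seven pairs are genuinely exhausted, and that adjoining $\mathbb{W}$ and $\mathbb{T}$ creates no new implication among the old nodes (it does not, since among $\{\mathbb{S},\mathbb{M},\mathbb{L},\mathbb{V},\mathbb{R}\}$ reachability is unchanged, $\mathbb{V}$ now reaching only the new node $\mathbb{W}$). Conceptually the real content is isolated precisely in (a), (b), (c): statement (c) separates the Laver and Willowtree columns in both directions, while (a) is the only route to seeing that Willowtree regularity fails to entail $\mathbb{V}$-regularity. These three are the statements whose forcing-theoretic proofs in the Matet and Willowtree models are the substance of the paper, and the Observation is just the formal verification that they suffice.
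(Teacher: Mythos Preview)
Your proposal is correct and follows essentially the same approach as the paper: both arguments reduce completeness to a transitivity/propagation chase, feeding in the known completeness of the $\{\mathbb{S},\mathbb{M},\mathbb{L},\mathbb{V},\mathbb{R}\}$ subdiagram together with (a), (b), (c), and in particular both pivot on $\mathbb{V}\not\Rightarrow\mathbb{M}$ to handle the non-implications out of $\mathbb{W}$. Your write-up is more systematic (explicit propagation rule, explicit pair count), while the paper's is more narrative, but the underlying logic is identical.
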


\begin{proof}
We go through all non-implications that need to be checked. 

By the completeness of the subdiagram with Matet and Willowtree forcing removed, we only need to show the non-implications with the two additional forcings for the forcings 
$\mathbb{S}$, 
$\mathbb{M}$,
$\mathbb{L}$, and
$\mathbb{V}$. (The forcing 
$\mathbb{R}$ does not have any non-implications in the Uniform Regularities Diagram.)
Sacks, Miller and Laver regularity cannot imply either Matet or Willowtree regularity
by (c) and transitivity. Since Silver forcing does not 
does not add unbounded reals (cf.\ \cite[Proposition 4.2]{BHL05}), we have
$\bfDelta^1_2(\mathbb{V})\not\Rightarrow\bfDelta^1_2(\mathbb{M})$; thus Silver regularity cannot
imply Matet regularity by transitivity.

Again, since $\bfDelta^1_2(\mathbb{V})\not\Rightarrow\bfDelta^1_2(\mathbb{M})$, by transitivity, Willowtree regularity cannot imply Miller regularity
(and therefore not Matet, Laver, or Mathias regularity).
Finally, the Matet non-implications all follow directly from (a) and (b).\end{proof}

In this paper, we shall 
prove two of the assumptions of Observation \ref{obs:diagramcompleteness}:
statement (a) in Corollary \ref{cor:matetvssilver} and a weaker version of (c), viz.
$\bfDelta^1_2(\mathbb{S})\not\Rightarrow\bfDelta^1_2(\mathbb{W})$
(cf.\ Corollary \ref{cor:sacksnotwillow}). The combination of Corollaries 
\ref{cor:matetvssilver} \& \ref{cor:sacksnotwillow} implies that the following subdiagram is complete:
$$\xymatrix{
 & \bfDelta^1_2(\mathbb{S}) & \\
 & \bfDelta^1_2(\mathbb{W})\ar@{=>}[u] & \\
\bfDelta^1_2(\mathbb{V})\ar@{=>}[ur] & & \bfDelta^1_2(\mathbb{T})\ar@{=>}[ul]}
$$

\noindent Note that statement (a) follows from the fact that Matet forcing
preserves p-points which was proved by \cite[Theorem 4]{Eisworth2002-EISFAS}. Our proof
is more direct and combinatorial. 

\section{Minimal degree and avoiding quasi-generics}
Matet forcing consists of pairs $(s, A)$, where $s \in \omega^{< \omega}$ is strictly increasing and $A \subseteq [\omega]^{< \omega}$ is infinite, and for all $a \in A$, $max(ran(s)) < min(ran(a))$. $(t, B) \leq (s, A) \text{ iff }$: 

\[s\subseteq t \forall b \in B \exists A' \subseteq A (|A'| < \omega \wedge b = \cup A') \wedge \mathrm{ran}(t) \setminus \mathrm{ran}(s) \subseteq A\]

A moment of reflection shows that the Matet conditions can be seen as Miller trees, but with a different ordering.

At the heart of almost all the arguments that shall be illustrated here is the fusion technique. But before defining that we shall need to define some terminology.
$A \sqsubseteq B$ (read $A$ is a condensation of $B$) iff every $a \in A$ is a finite union of elements of $B$. For any finite subset $t$ of $\omega$, $A \mathrm{past} t$, denotes the subset of $A$, consisting of all the $a$, such that $\mathrm{min}(a) > \mathrm{max}(t)$. We shall by abuse of notation say that $t < a$, when $\mathrm{min}(a) > \mathrm{max}(t)$. 

If $(s, A)$ is a Matet condition and $A_{n}$ is a sequence such that $A_{n+1} \sqsubseteq A_{n} \mathrm{past} a^{0}_{n}$, where $a^{0}_{n}$ is the first element of $A_{n}$ with respect to $<$, and if $B \ = \ \{a^{0}_{n} : n \in \omega\}$, then $(s,B) \leq (s, A)$. We shall denote by $\mathrm{FU}(A)$ the set of all finite unions of elements of $A$. 

Our initial objective is to show that Matet forcing has \textit{pure decision property} and \textit{adds reals of minimal degree}. That is if $V$ is the ground model and $x$, a real added by Matet forcing, then $x$ can recover the generic real added by the Matet forcing.

The proof of the first can be found in \cite[Lemma 2.6]{Eisworth2002-EISFAS} but we include it here as it is an integral part of the argument. The proofs of the above two shall also illustrate the fusion technique in detail.
\begin{thm}\label{pure dec}
Let $\theta$ be a sentence and $(s,A)$ a Matet condition. Then there is an extension $(s,B)$ such that for any $t \in \mathrm{FU}(B)$, $(s,B \mathrm{past} t)$ decides $\theta$.
\end{thm}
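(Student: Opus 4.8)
The plan is to establish the single-condition pure-decision statement and to deduce the displayed uniform version from it. These are equivalent: if $(s,B)\Vdash\theta$ then, since every tail $(s,B\,\mathrm{past}\,t)$ is a pure extension of $(s,B)$ and forcing persists downward, each tail forces $\theta$ as well (and symmetrically for $\neg\theta$); conversely any two tails $(s,B\,\mathrm{past}\,t)$ and $(s,B\,\mathrm{past}\,t')$ are compatible through $(s,B\,\mathrm{past}\,(t\cup t'))$, so they can never decide $\theta$ in opposite directions. Hence it suffices to produce a condensation $B\sqsubseteq A$ with $(s,B)$ deciding $\theta$.

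To do this I would run a combinatorial-forcing argument. Say that $(s,C)$ \emph{accepts} $\theta$ if $(s,C)\Vdash\theta$, and that it \emph{rejects} $\theta$ if no pure extension of $(s,C)$ accepts $\theta$; otherwise it is \emph{undecided}. Both acceptance and rejection persist under condensation, and an undecided condition has, by definition, a pure extension that accepts. The first step is a fusion lemma: there is a pure extension $(s,B)\leq(s,A)$ such that for every $t\in\mathrm{FU}(B)$ the stem-extension $(s\cup t, B\,\mathrm{past}\,t)$ either accepts or rejects $\theta$. This is built by the fusion scheme recorded above, processing at stage $n$ the finitely many $t$ formed from the already-frozen first blocks $a_0^0,\dots,a_{n-1}^0$: whenever such a $t$ is undecided I shrink past the current first block to a pure extension that accepts, and, since decisiveness persists downward, this is never undone later.

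With such a $B$ in hand I would colour $\mathrm{FU}(B)$ by assigning to each $t$ the status (accept or reject) of $(s\cup t, B\,\mathrm{past}\,t)$, and apply Hindman's finite-unions theorem to pass to a condensation $B'\sqsubseteq B$ on which $\mathrm{FU}(B')$ is monochromatic. If the colour is ``accept,'' then every condition below $(s,B')$ admits a further stem-extension lying in $\mathrm{FU}(B')$ that forces $\theta$, so no extension can force $\neg\theta$ and hence $(s,B')\Vdash\theta$. If the colour is ``reject,'' then any hypothetical $(s',C)\leq(s,B')$ forcing $\theta$ would have stem $s'=s\cup t$ with $t\in\mathrm{FU}(B')$ and $C\sqsubseteq B'\,\mathrm{past}\,t$, exhibiting a pure extension of the rejecting condition $(s\cup t, B'\,\mathrm{past}\,t)$ that forces $\theta$ --- a contradiction; thus $(s,B')\Vdash\neg\theta$. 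In either case $(s,B')$ decides $\theta$, which is what we need.

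The main obstacle is the interplay in the second step between the fusion and the Ramsey-theoretic homogenization. The fusion must eliminate undecidedness simultaneously for all $t\in\mathrm{FU}(B)$ while guaranteeing persistence, and the whole scheme only works because I measure conditions by the \emph{accept/reject} dichotomy rather than directly by ``forces $\theta$'' versus ``forces $\neg\theta$'': it is precisely the definition of rejection as ``no pure extension forces $\theta$'' that lets monochromatic rejection be upgraded, via the factoring argument, into $(s,B')\Vdash\neg\theta$. Getting these definitions and the persistence bookkeeping exactly right, and verifying that stem-extensions (rather than arbitrary extensions) furnish the dense set used in the accepting case, is where the real care is required; the one external input is Hindman's theorem on finite unions.
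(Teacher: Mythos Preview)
Your argument is correct, but it takes a heavier route than the paper's. Both proofs reduce the statement to finding a pure extension $(s,B)\leq(s,A)$ that outright decides $\theta$ (your first paragraph makes this reduction explicit; the paper leaves it implicit). The difference lies in how that decision is obtained. The paper runs a much lighter fusion: at stage $n$ it only arranges that the single stem-extension $(s\cup a_n^0,\,A_n\,\mathrm{past}\,a_n^0)$ decides $\theta$, with no attention to finite unions. Once $A'=\{a_n^0:n\in\omega\}$ is built, a plain pigeonhole on the \emph{singletons} of $A'$ yields an infinite $B\subseteq A'$ on which all $(s\cup t,\,A'\,\mathrm{past}\,t)$ decide $\theta$ the same way; predensity of these stem-extensions below $(s,B)$ then gives $(s,B)\Vdash\theta$ (or $\neg\theta$). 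No accept/reject trichotomy and no Hindman are needed.

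Your Prikry-style scheme (accept/reject, fusion over all finite unions of frozen blocks, then Hindman's finite-unions theorem) is a standard and perfectly valid template, and the persistence bookkeeping you describe is right. It is simply more machinery than the problem requires: because conditions of the form $(s\cup b,\,B\,\mathrm{past}\,b)$ for $b\in B$ are already predense below $(s,B)$, it suffices to homogenize on single blocks, which is pigeonhole rather than Hindman. So your proof buys robustness and generality (the framework ports to other settings), while the paper's buys economy: it stays entirely elementary and avoids the external Ramsey-theoretic input you flagged as the one nontrivial ingredient.
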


\begin{proof}
 We shall by induction define a sequence $A_{n}$ starting with $A_{-1} = A$, such that:
 \begin{itemize}
     \item $A_{n+1} \sqsubseteq A_{n} \mathrm{past} a^{0}_{n}$.
     \item $(s \cup a^{0}_{n}, A_{n}\mathrm{past} a^{0}_{n})$ decides $\theta$.
 \end{itemize}

 Given, $A_{n}$, we can simply find an extension of $(s \cup a^{1}_{n}, A_{n}\mathrm{past}a^{1}_{n})$ say $(s\cup t, A_{n}')$ such that it decides $\theta$. We let $A_{n+1} = A_{n}' \cup \{t\}$.

 We now set $A' = \{a^{0}_{n} \ : \ n \in \omega\}$. Now, either for infinitely many of $t \in A'$, $(s \cup t, A'\mathrm{past}t) \Vdash \theta$ or for infinitely many of them $(s \cup t, A'\mathrm{past} t) \Vdash \neg \theta$. We set $B = \{t \in A' \ : \ (s \cup t, A'\mathrm{past} t)\Vdash \theta\}$ or $B = \{t \in A' \ : \ (s \cup t, A'\mathrm{past} t)\Vdash \neg \theta\}$ depending on which is infinite.

 $(s,B)$ is therefore the required extension.
\end{proof}
To the extent of proving the second one, let $\dot{x}$ be a name for a non-ground model real and $(s,A)$ be a Matet condition forcing that.

By virtue of pure decision property (\ref{pure dec}), we can assume that for every $t \in \mathrm{FU}(A)$, there is a ground model real called the guiding real $x_{t}$, such that \[(s \cup t, A\mathrm{past} t_{k}) \Vdash \dot{x}\restriction k = x_{t}\restriction k\] for all $k \in \omega$.\\

The above can be proven by slightly modifying the proof of \cite[Lemma 2.7]{Eisworth2002-EISFAS}. We shall however sketch it here for the sake of completeness. 
\begin{proof}
    First of all we notice that, it is possible to inductively define a sequence $A_{n}$, starting with $A_{-1} = A$, such that $A_{n+1} \sqsubseteq A_{n}\mathrm{past}a^{0}_{n}$, and $(s,A_{n})$ decides $\dot{x}$ upto $n$. Now, considering $(s,B)$, where $B = \{a^{0}_{n} \ : \ n \in \omega\}$, we have that there is a guiding real corresponding to $s$, say $x_{s}$. 
    
    Now, we shall use the above argument repetitively in an inductive manner to arrive at the required condition. We need to define a sequence $B_{n}$, starting with $B_{-1} = B$ such that:
    \begin{itemize}
        \item $B_{n+1} \sqsubseteq B_{n} \mathrm{past} b^{0}_{n}$.
        \item for all $t \in \mathrm{FU}(\{b^{0}_{k} \ : \ k \leq n\})$, we have $(s \cup t, B_{n+1} \mathrm{past} t)$ satisfying the condition that there is a guiding real corresponding to $t$, say $x_{t}$.
    \end{itemize}

    Given $B_{n}$, we simply enumerate the terminal nodes of $\mathrm{FU}(\{b^{0}_{k} \ : \ k \leq n\})$ as $t_{0}, ... , t_{m}$. Set $C_{0} = B_{n}\mathrm{past}b^{0}_{n}$. Given $C_{i}$, set $C_{i+1} \sqsubseteq C_{i}\mathrm{past}t_{i+1}$, such that for $(s \cup t_{i + 1}, C_{i+1})$, there is a guiding real $x_{t_{i+1}}$. We set $B_{n+1} = C_{m}$. Setting $C = \{b^{0}_{n} \ : \ n \in \omega\}$, we have $(s,C)$ to be the required condition.
\end{proof}
\begin{thm}\label{minimal degree}
    Matet forcing adds reals of minimal degree
\end{thm}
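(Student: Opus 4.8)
The plan is to prove the density statement equivalent to minimality: for every name $\dot x$ for a real and every condition $(s,A)$ there is an extension $(s,B)\le(s,A)$ forcing either $\dot x\in V$ or that the Matet generic is computable from $\dot x$. So fix $(s,A)$; if some extension forces $\dot x\in V$ we are done, so assume $(s,A)\Vdash\dot x\notin V$, and apply the guiding-real construction established above so that, passing to an extension, $(s,A)$ reads $\dot x$ continuously: for each $t\in\mathrm{FU}(A)$ there is a guiding real $x_t\in V$ with $(s\cup t,A\,\mathrm{past}\,t)\Vdash\dot x\restriction k=x_t\restriction k$ for $k=k(t)\to\infty$. The coding I aim for is the following. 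After a fusion the condition will have the form $(s,B)$ with $B=\{b^0_n:n\in\omega\}$, and every block the generic adds is a finite union of elements of $B$; hence, as a set, each $b^0_n$ is either contained in or disjoint from the Matet real $m_G$, and $m_G=\mathrm{ran}(s)\cup\bigcup\{b^0_n:b^0_n\subseteq m_G\}$. Since $V[m_G]=V[G]$, it therefore suffices to recover the set $S=\{n:b^0_n\subseteq m_G\}$ from $x=\dot x^G$.

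The heart of the matter is a \emph{splitting lemma}: because $(s,A)\Vdash\dot x\notin V$, the value of $\dot x$ must genuinely depend on whether the generic includes the next available atom. Concretely, I would show that for every $t\in\mathrm{FU}(A)$, writing $b$ for the first element of $A$ past $t$, there is an extension of $(s\cup t,A\,\mathrm{past}\,b)$ (the ``skip $b$'' branch) forcing $\dot x\restriction j\neq x_{t\cup b}\restriction j$ for some $j$. The proof is by contradiction through pure decision (Theorem~\ref{pure dec}): if below some condition no such disagreement could ever be forced, then for every $t$ the value $\dot x$ would agree with $x_{t\cup b}$ on an initial segment growing with the stage regardless of whether $b$ is taken, the guiding reals $x_t$ would cohere into a single $y\in V$, and $\dot x=y$ would be forced, contradicting $\dot x\notin V$. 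This step is the main obstacle I anticipate, precisely because the ``next block'' is not unique — at each node there are infinitely many candidate finite unions to continue with — so I must use the condensation combinatorics to thin $A$ so that the splitting is not merely present but realised at a level $j$ bounded by a ground-model function of the stage, and thus detectable from only finitely much information about $x$.

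With the splitting lemma in hand I would run a fusion exactly in the style of the constructions above, interleaving the guiding-real bookkeeping with the requirement that splitting be realised at every node; the fusion converges to a legitimate condition $(s,B)$ with $B=\{b^0_n:n\in\omega\}$ because the thinning at stage $n$ only refines $A_n\,\mathrm{past}\,a^0_n$. The resulting $(s,B)$ has the property that for every $n$ and every $t\in\mathrm{FU}(\{b^0_k:k<n\})$, the condition $(s\cup t,B\,\mathrm{past}\,b^0_n)$ forces $\dot x\restriction j\neq x_{t\cup b^0_n}\restriction j$, while $(s\cup t\cup b^0_n,B\,\mathrm{past}\,b^0_n)$ forces $\dot x\restriction j=x_{t\cup b^0_n}\restriction j$. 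I then claim $(s,B)$ forces $S$, hence $m_G$, to be computable from $x$: working in $V[x]$ with the ground-model data $(s,B)$ and $(x_t)_t$, decide $n\in S$ recursively. Having recovered the true stem $t$ assembled from $\{b^0_k:k<n,\ k\in S\}$, read $x$ up to the relevant $j$ and test whether $x\restriction j=x_{t\cup b^0_n}\restriction j$; by the displayed dichotomy this holds precisely when $b^0_n\subseteq m_G$, i.e.\ when $n\in S$. Thus $m_G\in V[x]$, and since $x\in V[m_G]=V[G]$ with $x\notin V$, we get $V[x]=V[G]$. As $\dot x$ and $(s,A)$ were arbitrary, every real added by Matet forcing that is not in $V$ computes the generic, which is exactly the assertion that Matet forcing adds reals of minimal degree.
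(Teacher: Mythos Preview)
Your proposal is correct and takes essentially the same approach as the paper: both run a fusion below the guiding-real condition to guarantee that adding each next block $b^0_n$ changes the decided initial segment of $\dot x$, with the splitting step justified exactly as the paper does (``this is possible because all the guiding reals are ground model and $(s,A)$ forces that $\dot x$ is not ground model''). The only difference is cosmetic: the paper packages the outcome as a continuous injective ground-model map $f:[(s,B)]\to 2^\omega$ with $(s,B)\Vdash f(x_G)=\dot x$, whereas you unwind the inverse of that map into an explicit recursive recovery of $S=\{n:b^0_n\subseteq m_G\}$ from $x$.
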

\begin{proof}
  We are going to build a sequence $A_{n}$, starting with $A_{-1} = A$ and for every $t \in \mathrm{FU}(\{a^{0}_{j} : j \leq n\})$, $\dot{x}_{s\cup t}$ denotes the maximal initial segment decided by $(s\cup t, (A_{n+1}\cup\{a^{0}_{j} : j \leq n+1\}) \mathrm{past} t)$ and $\ell(s \cup t)$, denotes the largest set according to $<$ in $A_{n+1}\cup\{a^{0}_{j} : j \leq n+1\}$ which is a subset of $s \cup t$.

  Now, that we have set up the terminology, we can proceed with the proof. We require that for every $n \in \omega$,  $A_{n+1} \sqsubseteq A_{n} \mathrm{past} a^{0}_{n}$ and for every $t \in \mathrm{FU}(\{a^{0}_{j} : j \leq n\})$,
\[(s\cup t, (A_{n+1}\cup\{a^{0}_{j} : j \leq n+1\}) \mathrm{past} t)\Vdash \dot{x}_{s \cup t} \neq x_{s \cup t \restriction \mathrm{min}(\ell(s \cup t))}\restriction |\dot{x}_{s\cup t}|\]
and
\[(s\cup t \restriction \mathrm{min}(\ell(s \cup t)), (A_{n+1}\cup\{a^{0}_{j} : j \leq n+1\}) \mathrm{past} t)\Vdash \dot{x}_{s \cup t\restriction \mathrm{min}(\ell(s \cup t))} = x_{s \cup t\restriction \mathrm{min}(\ell(s \cup t))} \restriction |\dot{x}_{s\cup t}|\]

$A_{n+1}$ is actually constructed inductively. Let's say we enumerate $\mathrm{FU}\{a^{0}_{j} \ : \ j \leq n\}$ as $(t_{i})_{i \in m}$, we set $B_{0} = A_{n} \mathrm{past} a^{0}_{n}$. We form a sequence $(B_{i})_{i \in m}$, such that $B_{i+1} \sqsubseteq B_{i}$, $b^{0}_{i} \subseteq b^{0}_{i+1}$, and for all $i \in m$, \[(s \cup t_{i} \cup b^{0}_{i}, B_{i}\mathrm{past}b^{0}_{i})\Vdash \dot{x}_{s \cup t_{i} \cup b^{0}_{i}} \neq x_{s \cup t_{i} }\restriction |\dot{x}_{s\cup t_{i} \cup b^{0}_{i}}|\]
and
\[(s \cup t_{i}, B_{i}\mathrm{past}b^{0}_{i})\Vdash \dot{x}_{s \cup t_{i}} = x_{s \cup t_{i} }\restriction |\dot{x}_{s\cup t_{i} \cup b^{0}_{i}}|\]

This is possible because all the guiding reals are ground model and $(s, A)$ forces that $\dot{x}$ is not ground model. Finally, $A_{n+1} = B_{m}$.

Now, we just let $B = \{a^{0}_{n} : n \in \omega\}$. Then, we have that the function $f : [(s, B)] \rightarrow 2^{\omega}$ defined as \[f(x) = \bigcup_{k \in \omega} \dot{x}_{s \cup \bigcup_{k \in \omega}b^{n_{k}}}\] where, $x = s \cup \bigcup_{k \in \omega}b^{n_{k}}$, to be a continous injective ground model one, such that $(s,B) \Vdash f(x_{G}) = \dot{x}$. \end{proof}

Note that this proves the fact that Matet forcing adds reals of minimal degree. Moreover the function $f$ is ground model and continuous.

Now, we shall come to the part of avoiding quasi-generics of closed locally countable graphs. That is if $G$ is a closed locally countable graph, we shall show that for any real say $r$ added by the Matet forcing, it is contained in a ground model Borel set $B$, such that $B$ is $G$ independent, that is any two elements of $B$ do not form a $G$ edge.  

A graph $G$ on $2^{\omega}$ is said to be closed locally countable, iff $G$ is a closed subset of $2^{\omega} \setminus \Delta$, where $\Delta$ is the diagonal (in other words the equality realtion on $2^{\omega}$ is the $\Delta$) and for every $x$, there are atmost countably many $y$ such that $(x,y) \in G$.

As said earlier it will also boil down to something that is ultimately a fusion argument. If we notice carefully, it is easy to observe that\[T_{(s,B)}(\dot{x}) = \{\dot{x}_{(t,C)} \ : \ (t,C) \leq (s,B)\}\] is a perfect tree.
\begin{thm}
    Matet forcing does not add quasi-generics of closed locally countable graphs
\end{thm}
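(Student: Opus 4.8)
The plan is to reduce the statement to a single fusion construction followed by taking a continuous image. By Theorem~\ref{minimal degree} I may fix, below the given condition, a Matet condition $(s,B)$ together with the ground-model continuous injection $f\colon [(s,B)]\to 2^{\omega}$ satisfying $(s,B)\Vdash \dot x=f(x_G)$, where $[(s,B)]$ is the branch space of Matet reals below $(s,B)$ (a perfect Polish space coded in the ground model) and $\dot x$ names the given new real. Pulling $G$ back along $f$, set $H=(f\times f)^{-1}(G)$; since $f$ is injective and continuous, $H$ is again a closed, locally countable graph, now living on $[(s,B)]$. It then suffices to produce a subcondition $(s,C)\le(s,B)$ whose branch set $[(s,C)]$ is $H$-independent: for then $K=f[[(s,C)]]$ is, by the Lusin--Souslin theorem, a ground-model \emph{Borel} set (an injective continuous image of a Borel set), it is $G$-independent (a $\Pi^1_1$, hence absolute, property computed in the ground model), and $(s,C)\Vdash \dot x=f(x_G)\in K$. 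As $(s,A)$ and $\dot x$ were arbitrary, a density argument finishes the theorem.

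Two features of $H$ drive the construction. First, \emph{local countability}: for each $z$ the section $H_{z}$ is countable, hence meager in the perfect space $[(s,B)]$, so by Kuratowski--Ulam $H$ is meager; this is the conceptual reason a Mycielski-style independent perfect set should exist inside $[(s,B)]$. Second, \emph{closedness}: $H^{c}$ is open, so once two finite value-stems $\sigma,\tau$ satisfy $[\sigma]\times[\tau]\subseteq H^{c}$, \emph{every} pair of branches extending them is non-adjacent. Thus $H$-independence need only be secured on a cofinal family of finite stems and then propagates automatically to the limits, which is exactly what makes the property compatible with a fusion.

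The heart of the proof is therefore a fusion on $(s,B)$ in the style of Theorem~\ref{minimal degree}, interleaved with a Mycielski-type diagonalisation. I would build a condensation sequence $(s,C_{n})$ with first blocks $c^{0}_{n}$ and $C=\{c^{0}_{n}:n\in\omega\}$, maintaining at stage $n$ the requirement that for every pair of distinct inclusion-types $t\ne t'$ over $c^{0}_{0},\dots,c^{0}_{n-1}$ the corresponding subtrees of branches have been thinned so that their value-sets lie in disjoint basic open boxes contained in $H^{c}$. The step that makes this possible is precisely local countability: at stage $n$ only finitely many stems are in play, each value-stem has only countably many $H$-neighbours, while the Matet ordering offers infinitely many candidate next blocks (equivalently, at each node the value-tree $T_{(s,B)}(\dot x)$ splits perfectly), so for each of the finitely many pairs one discards the countably many bad continuations and thins into an $H^{c}$-box via a legitimate condensation $C_{n+1}\sqsubseteq C_{n}\,\mathrm{past}\,c^{0}_{n}$. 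If $x\ne y$ are two branches of the final $(s,C)$, they first diverge at some stage, are thereby routed into two distinct types whose value-sets were placed in an $H^{c}$-box, and since later steps only shrink those subtrees we get $(x,y)\notin H$, i.e.\ $(f(x),f(y))\notin G$; hence $[(s,C)]$ is $H$-independent, as required.

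The main obstacle is the fusion step itself: synchronising the Mycielski avoidance with the rigid Matet condensation structure. Two distinct branches need not diverge by a single include/exclude decision with a clean common stem, since the branch that \emph{excludes} a block still grows by absorbing later blocks, so its value is not confined to the stem decided so far. The care required is to arrange the bookkeeping so that at the divergence stage both subtrees' \emph{actual} value-sets, not merely their crude decided stems, are driven into disjoint $H^{c}$-boxes, while every thinning remains a genuine $\sqsubseteq$-past condensation so that the fusion converges to a condition $(s,C)\le(s,B)$. Once this diagonalisation is carried out consistently with the fusion, the closedness of $G$ guarantees that independence persists to the limit branches and the argument closes.
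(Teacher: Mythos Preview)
Your approach is correct and essentially the same as the paper's: both run a Mycielski-type fusion along a Matet condensation sequence, using local countability to dodge the (countably many) $G$-neighbours of each guiding real at every step and closedness of $G$ to freeze independence into open boxes that persist to the limit. The only difference is packaging: the paper works directly on the value side with the guiding reals $x_t$ and the tree $T_{(s,C)}(\dot x)$, thereby obtaining a \emph{closed} ground-model $G$-independent set without needing Lusin--Souslin, whereas you first pull $G$ back along the minimal-degree map $f$, carry out the identical fusion on the domain, and then push forward.
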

\begin{proof}
  Now, we look forward to create once again a sequence $A_{n}$ as before, starting with $A_{-1} = A$ such that $A_{n+1} \sqsubseteq A_{n} \mathrm{past} a^{0}_{n}$ and for every $t \in \mathrm{FU}(\{a^{0}_{j} : j \leq n\})$ we have 

\[(s\cup t, (A_{n+1}\cup\{a^{0}_{j} : j \leq n+1\}) \mathrm{past} t)\Vdash ([\dot{x}_{s \cup t}] \times [x_{s \cup t \restriction \mathrm{min}(\ell(s \cup t))}\restriction |\dot{x}_{s\cup t}|]) \cap G = \emptyset\]

Like in the proofs of \ref{minimal degree} and \ref{pure dec}, given $A_{n}$, we enumerate $\mathrm{FU}(\{a^{0}_{j} \ : \ j \leq n\})$ as $(t_{i})_{i \in m}$ and set $B_{0} = A_{n} \mathrm{past} a^{0}_{n}$ and we define a sequence $(B_{i})_{i \in m}$ such that, $B_{i+1} \sqsubseteq B_{i}$, $b^{0}_{i} \subseteq b^{0}_{i+1}$, and \[(s \cup t_{i} \cup b^{0}_{i}, B_{i}\mathrm{past}b^{0}_{i})\Vdash ([\dot{x}_{s \cup t_{i} \cup b^{0}_{i}}] \times [x_{s \cup t_{i} }\restriction |\dot{x}_{s\cup t_{i} \cup b^{0}_{i}}|]) \cap G = \emptyset\]

This is possible, due to the fact that $T_{(s,B)}$ is a perfect tree and choosing $b^{0}_{i+1}$, long enough, we shall have $(x_{s \cup t_{i} \cup b^{0}_{i+1}}, x_{s \cup t_{i}}) \notin G$, and $B_{i+1}$ is then obtained by deleting sufficiently many elements of $B_{i}\mathrm{past}b^{0}_{i+1}$ in an increasing order, since for sufficiently long initial segments $\sigma$ and $\tau$ of $x_{s \cup t_{i} \cup b^{0}_{i+1}}$ and $x_{s \cup t_{i}}$ respectively, we have $([\sigma]\times[\tau]) \cap G = \emptyset$, due to closedness of $G$. $B_{m} = A_{n+1}$ and we define $C$ to be $\{a^{0}_{n} \ : \ n \in \omega\}$. Then, $(s,C)$ is the required condition for which $[T_{(s,C)}(\dot{x})]$ is $G$ independent that is for any two elements $x$ and $y$ of it, $(x,y) \notin G$. It is also a ground model closed set. Moreover $(s,C) \Vdash \dot{x} \in [T_{(s,C)}(\dot{x})]$. This completes the proof.  
\end{proof}

\section{Iteration of Matet forcing}

The argument of the single case can be generalised to the iteration case but for that one needs to first come up with some new terminology.

Let $\alpha$ be an ordinal such that $\alpha$ $<$ $\omega_{2}$. Then, if $(s,A) \in \mathbb{T}_{\alpha}$, and $F \subseteq \mathrm{supp}(s,A)$, finite and $k \in \omega$. We say that $(t,B) \leq_{F,k} (s,A)$ iff for all $\gamma \in F$ $(t,B)\restriction \gamma \Vdash t(\gamma) = s(\gamma)$ and $(b_{i}(\gamma) = a_{i}(\gamma))$ for all $i \leq k(\gamma)$.  

A fusion sequence consists of the following:

\begin{itemize}
    \item $F_{n},k_{n}$, where $k_{n} : F_{n} \rightarrow \omega$
    \item $\bigcup_{n \in \omega} F_{n} = \mathrm{supp}(s,A)$
    \item $k_{n+1}(\gamma) \geq k_{n}(\gamma)$, for all $\gamma \in F_{n}$ 
    \item $(s_{n},A_{n})$ such that $(s_{n+1},A_{n+1}) \leq_{F_{n},k_{n}} (s_{n},A_{n})$
    
\end{itemize}

Our attempt is to build a fusion sequence $(s_{n},A_{n}), F_{n}, k_{n}$, such that if $(s,A)$ is the fusion of $(s_{n},A_{n})$, then $T_{(s,A)}$ is $G$ independent.

For the sake of notational convenience we shall be identifying the nodes of a Matet tree with $\omega^{< \omega}$ with the help of the natural order preserving-bijection such that $(s,A)\ast\emptyset = \mathrm{st}(s,A)$ and $(s,A)\ast(\sigma^{\smallfrown}n)$ is the $nth$ immediate successor of $(s,A)\ast\sigma$ according to the lexicographic ordering on $T_{(s,A)}$. We shall be using the notations $(s,A)\ast\sigma$ and $T_{(s,A)}\ast\sigma$ interchangeably.

We say that a $\mathbb{T}_{\alpha}$ is a $(F_{n}, k_{n})$ faithful condition iff for any two pairs $\sigma$ and $\sigma'$ $\in$ $\Pi_{\gamma \in F_{n}}k(\gamma)^{k(\gamma)}$, such that $\sigma \neq \sigma'$, $([T_{(s,A)}\ast\sigma] \times [T_{(s,A)}\ast\sigma']) \cap G = \emptyset$.

\begin{lem}
Suppose that $(s,A)$ is $(F_{n},k_{n})$ faithful and $k'_{n}$ is such that $k'_{n}(\gamma) = k_{n}(\gamma) + 1$ and for all $\beta \in F_{n}\setminus \{\gamma\}$, $k'_{n}(\beta) = k_{n}(\beta)$. Then one can find $(s,B) \leq_{F_{n}, k_{n}} (s,A)$, such that $(s,B)$ is $(F_{n},k'_{n})$ faithful.
\end{lem}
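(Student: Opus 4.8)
The plan is to treat this as the one-level inductive step of the global fusion: assuming $(F_n,k_n)$-faithfulness, I push $G$-independence up to the finer grid $\prod_{\beta\in F_n}k'_n(\beta)^{k'_n(\beta)}$, which differs from the coarser grid only by one extra splitting direction at the single coordinate $\gamma$. First I would note that most pairs separate for free. If $\sigma,\sigma'$ refine distinct cells of the coarser grid $\prod_{\beta\in F_n}k_n(\beta)^{k_n(\beta)}$, then by monotonicity of the operation $T_{(s,A)}*(\cdot)$ the product $[T_{(s,A)}*\sigma]\times[T_{(s,A)}*\sigma']$ sits inside a product of two distinct coarser cells, which is $G$-free by hypothesis. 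Hence only the finitely many pairs $(\sigma,\sigma')$ refining the \emph{same} coarser cell and differing solely in the new $\gamma$-direction have to be dealt with.

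I would then remove these finitely many new pairs one at a time by a finite sub-induction, each step producing a $\leq_{F_n,k_n}$-extension in which one more pair becomes $G$-independent while every previously separated pair stays separated --- shrinking a tree shrinks its body, so no earlier separation can be undone. For a single pair $(\sigma,\sigma')$ the argument is precisely the single-coordinate argument of the theorem that Matet forcing adds no quasi-generics of closed locally countable graphs: the tree of guiding reals is perfect and $G$ is locally countable, so inside the two perfect subtrees one can pick branches whose guiding reals are not $G$-related; closedness of $G$ then supplies long enough initial segments $\tau$ and $\tau'$ with $([\tau]\times[\tau'])\cap G=\emptyset$. Deleting from the $\gamma$-block all sets of index strictly greater than $k_n(\gamma)$ that steer branches off $[\tau]$, respectively $[\tau']$, yields $([T*\sigma]\times[T*\sigma'])\cap G=\emptyset$; since only sets of index above $k_n(\gamma)$ are touched, the stem $s(\gamma)$ and the first $k_n(\gamma)$ sets survive, so the extension lies below $(s,A)$ in $\leq_{F_n,k_n}$.

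Collecting the result of the finite sub-induction yields a single $(s,B)\leq_{F_n,k_n}(s,A)$ that is $(F_n,k'_n)$-faithful. The main obstacle is the iteration bookkeeping: at the coordinate $\gamma$ the Matet tree $A(\gamma)$ is a $\mathbb{T}_\gamma$-name depending on the generic below $\gamma$, so the choice of separating branches and the pruning must be carried out under $(s,B)\restriction\gamma$ rather than in $V$. I would handle this in the standard way for iterated fusion: descend to conditions deciding enough of the lower part, run the one-coordinate pruning fibrewise, and reassemble the fibres into a single name $B(\gamma)$ by mixing over a maximal antichain. The finiteness of $F_n$ and of each $k_n(\gamma)$ keeps this reassembly finite and guarantees that $\leq_{F_n,k_n}$ is preserved at every stage.
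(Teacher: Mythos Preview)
Your proposal is correct and follows essentially the same strategy as the paper: a finite sub-induction over the coarser cells, at each stage invoking the single-step argument (perfect tree of guiding reals, local countability of $G$, closedness of $G$) at the distinguished coordinate while preserving $\leq_{F_n,k_n}$. The only organisational difference is that the paper enumerates the coarse cells $\sigma_0,\dots,\sigma_k$ and at each $\sigma_m$ separates \emph{all} new successors at the coordinate simultaneously, whereas you iterate over the new \emph{pairs} one at a time after first observing that cross-cell pairs are already $G$-free by monotonicity; your extra observation shortens the list but does not change the argument.
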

\begin{proof}
    Let us denote $\mathrm{max}(F_{n})$ as $\gamma_{\mathrm{max}}$. Let, $\{\sigma_{0},\sigma_{1}, ... , \sigma_{k}\}$ be an enumeration of $\Pi_{\gamma \in F_{n}} k(\gamma)^{k(\gamma)}$. We inductively define a $\leq_{F_{n},k_{n}}$ decreasing sequence $(s,B_{m})$, such that:

    \begin{itemize}
    \item  $([T_{(s,B_{m})\ast\sigma_{m}(\gamma_{\mathrm{max}})^{\smallfrown}n}(\dot{x})]\times[T_{(s,B_{m})\ast\sigma_{m}(\gamma_{\mathrm{max}})^{\smallfrown}n'}(\dot{x})]) \cap G = \emptyset$ 
    
\end{itemize}
   
    Suppose that $(s,B_{m-1})$ has already been defined. Then, due the fact that $G$ is closed and locally countable, just as in the single-step, $(s,B_{m-1})\ast\sigma_{m}\restriction\gamma_{\mathrm{max}}$ forces that for every $n \in \omega$, there is a tail $t_{n}$ $ \leq (s,B_{m-1})\ast\sigma_{m}(\gamma_{\mathrm{max}})^{\smallfrown}n[\gamma_{\mathrm{max}},\alpha)$, such that for $n \neq n'$:

    \[([T_{t_{n}}(\dot{x})]\times[T_{t_{n'}}(\dot{x})]) \cap G = \emptyset\]

    Therefore, one can find $(s,B_{m}) \leq_{(F_{n}, k_{n})} (s, B_{m-1})$, such that $(s,B_{m})\ast\sigma_{m}\restriction \gamma_{\mathrm{max}}$ forces that for all $n \in \omega$ there exists some $p_{n} \in \omega$ 
 such that $(s,B_{m})\ast\sigma_{m}(\gamma)^{\smallfrown}n = t_{p_{n}}$ and therefore we have that: 
    \[([T_{(s,B_{m})\ast\sigma_{m}(\gamma_{\mathrm{max}})^{\smallfrown}n}(\dot{x})]\times[T_{(s,B_{m})\ast\sigma_{m}(\gamma_{\mathrm{max}})^{\smallfrown}n'}(\dot{x})]) \cap G = \emptyset\]

    We let $(s,B)$ to be $(s,B_{k})$. This completes the proof.
    
\end{proof}

\begin{thm}\label{Matet vs. Silver}
    $V^{\mathbb{T}_{\omega_{2}}}$ has no quasi-generics of closed locally countable graphs.
\end{thm}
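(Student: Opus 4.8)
The plan is to lift the single-step argument (the preceding theorem, that one step of Matet forcing adds no quasi-generic of a closed locally countable graph) to the full countable support iteration $\mathbb{T}_{\omega_2}$ by running a fusion along all coordinates simultaneously, with the Lemma above serving as the single ``upgrading'' step. Fix a closed locally countable graph $G$, a $\mathbb{T}_{\omega_2}$-name $\dot x$ for a real, and a condition $(s,A)$ forcing that $\dot x$ is quasi-generic for $G$. Since every condition has countable support and $\dot x$ is, by properness of the iteration, essentially decided by countably many coordinates, I may reflect so that $G$ is coded in an intermediate model below some $\beta<\omega_2$ and all relevant activity takes place on the countable set $\mathrm{supp}(s,A)$, which I enumerate as $\{\gamma_i : i \in \omega\}$ to drive the bookkeeping. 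The goal is to produce a fusion $(s,A)$ of the initial condition such that $[T_{(s,A)}(\dot x)]$ is a $G$-independent ground-model closed set with $(s,A) \Vdash \dot x \in [T_{(s,A)}(\dot x)]$; this places $\dot x$ inside a ground-model $G$-independent Borel set and contradicts quasi-genericity.

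\textbf{Construction.} I build a fusion sequence $(s_n,A_n), F_n, k_n$ of the type defined above, maintaining at every stage that $(s_n,A_n)$ is $(F_n,k_n)$ faithful, starting from the given condition with $F_0=\emptyset$ (so faithfulness is vacuous). At step $n$ the bookkeeping selects a coordinate $\gamma \in \{\gamma_i : i \le n\}$ at which to raise the fusion level: I set $k'_n(\gamma)=k_n(\gamma)+1$, leave the other values fixed, and apply the Lemma to pass from an $(F_n,k_n)$ faithful condition to an $(F_n,k'_n)$ faithful extension that is $\le_{F_n,k_n}$ below it. Interleaving the growth of $F_n$ so that $\bigcup_n F_n = \mathrm{supp}(s,A)$ with these increments, arranged so that $k_n(\gamma)\to\infty$ for every $\gamma$, yields a genuine fusion sequence each of whose terms is faithful for its own $(F_n,k_n)$. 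Because a $\le_{F_m,k_m}$ extension does not disturb the splitting recorded by $\sigma \in \prod_{\gamma \in F_m} k_m(\gamma)^{k_m(\gamma)}$ for $m \le n$, and because $G$-disjointness of two subtree bodies is witnessed by a pair of basic clopen rectangles disjoint from the closed set $G$ and hence is preserved under shrinking, faithfulness for each earlier stage persists along the whole sequence.

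\textbf{Verification.} Let $(s,A)$ be the fusion limit, which exists because the countable support iteration of Matet forcing is proper and the sequence is correctly supported; the standard fusion bookkeeping also ensures $(s,A) \Vdash \dot x \in [T_{(s,A)}(\dot x)]$ and that $T_{(s,A)}(\dot x)$ is perfect. Then $(s,A)$ is $(F_n,k_n)$ faithful for every $n$. Given distinct reals $x \ne y$ in $[T_{(s,A)}(\dot x)]$, they differ at some coordinate and so split at some node of the tree; choosing $n$ large enough that the relevant coordinate lies in $F_n$ and $k_n$ there is large enough to separate them, we obtain distinct $\sigma \ne \sigma' \in \prod_{\gamma \in F_n} k_n(\gamma)^{k_n(\gamma)}$ with $x \in [T_{(s,A)}\ast\sigma]$ and $y \in [T_{(s,A)}\ast\sigma']$. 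Faithfulness then gives $(x,y)\notin G$. As $x,y$ were arbitrary, $[T_{(s,A)}(\dot x)]$ is $G$-independent, which is the required contradiction.

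\textbf{Main obstacle.} The crux lies entirely at the limit of the iteration, not at the (already handled) single upgrading step. I expect the real work to be twofold: first, establishing that the fusion limit $(s,A)$ is a condition forcing $\dot x \in [T_{(s,A)}(\dot x)]$ with $T_{(s,A)}(\dot x)$ perfect, which is the usual but delicate properness-plus-fusion bookkeeping for countable support iterations and must be coordinated with the reflection that localizes $G$ and $\dot x$ below $\beta<\omega_2$; and second, verifying that faithfulness, an inherently finitary condition about $\prod_{\gamma \in F_n} k_n(\gamma)^{k_n(\gamma)}$, genuinely upgrades to pairwise $G$-independence of all branches at the limit, which hinges on the branching at every coordinate being captured cofinally by these products as $n\to\infty$. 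The closedness of $G$ is precisely what makes both the preservation of faithfulness under $\le_{F,k}$ shrinking and the passage to the limit go through, exactly as in the single-step theorem.
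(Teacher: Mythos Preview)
Your proposal is correct and follows essentially the same approach as the paper: both build a fusion sequence in the iteration using the preceding Lemma as the single upgrading step to maintain $(F_n,k_n)$-faithfulness, and then read off a ground-model $G$-independent Borel set containing $\dot x$ from the fusion limit. The only cosmetic difference is that the paper packages the conclusion via an explicit ground-model Borel injection $f:(\omega^\omega)^{\mathrm{supp}(s,B)}\to 2^\omega$ sending the generic to $\dot x$, whereas you phrase it directly as $G$-independence of $[T_{(s,A)}(\dot x)]$; these are the same object.
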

\begin{proof}
    Using the previous lemma, for every $\alpha \in \omega_{2}$, and $(s,A) \in \mathbb{T}_{\alpha}$, one can construct a fusion sequence as $(s_{n},A_{n})$ as above and define the fusion of it as $(s,B)$ such that \[\forall \gamma \in \alpha ((s,B)\restriction\gamma\Vdash \forall n \in \omega (B \sqsubseteq A_{n}))\].

    We now define a function $f : (\omega^{\omega})^{\mathrm{supp}(s,B)} \rightarrow 2^{\omega}$, such that if $x(\gamma)_{\gamma \in \mathrm{supp}(s,B)}$ is mapped to:
    \[\bigcup_{n \in \omega} \dot{x}_{(s,B)\ast(x(\gamma)\restriction k_{n}(\gamma))_{\gamma \in F_{n}}}\]

    Notice that this is a ground model Borel injective map and it maps the generic to $\dot{x}$.
    
\end{proof}

\begin{cor}
$\chi_{\mathrm{B}}(G)$ where $G$ is a locally countable graph is small in $V^{\mathbb{T}_{\omega_{2}}}$
\end{cor}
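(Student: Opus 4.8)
The plan is to read ``small'' as $\chi_{\mathrm{B}}(G)\le\aleph_1<2^{\aleph_0}$, and to produce in $V^{\mathbb{T}_{\omega_2}}$ a cover of $2^\omega$ by $\aleph_1$ many Borel $G$-independent sets. From such a cover a Borel colouring with $\aleph_1$ colours is immediate: colour each $x$ by the least index of a member of the cover containing it, so that each colour class is Borel and $G$-independent, and adjacent vertices receive distinct colours. First I would fix the bookkeeping of the iteration: assuming GCH in the ground model, Matet forcing has size $\aleph_1$ under CH, so the countable support iteration $\mathbb{T}_{\omega_2}$ is $\aleph_2$-cc, preserves CH at every stage $\alpha<\omega_2$, and forces $2^{\aleph_0}=\aleph_2$ in the final model. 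In particular every real of $V^{\mathbb{T}_{\omega_2}}$, hence every Borel code, appears in some intermediate model $V[G_\beta]$ with $\beta<\omega_2$.

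Next I would reflect the graph. Let $G$ be a closed locally countable graph in $V[G_{\omega_2}]$; its code is a real, so it already lies in $V[G_\beta]$ for some $\beta<\omega_2$. I would factor the iteration as $\mathbb{T}_{\omega_2}\cong\mathbb{T}_\beta*\dot{\mathbb{T}}^{\mathrm{tail}}$, where over $V[G_\beta]$ the tail $\dot{\mathbb{T}}^{\mathrm{tail}}$ is again a countable support Matet iteration, of length the order type of $[\beta,\omega_2)$, which is $\omega_2$. Consequently Theorem \ref{Matet vs. Silver} applies with $V[G_\beta]$ as the ground model: $V[G_{\omega_2}]=V[G_\beta]^{\mathbb{T}^{\mathrm{tail}}}$ has no quasi-generics for the $V[G_\beta]$-coded closed locally countable graph $G$. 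Unwinding the meaning of ``no quasi-generics'', this says precisely that every real $r\in V[G_{\omega_2}]$ lies in some closed $G$-independent set coded in $V[G_\beta]$.

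Finally I would count and invoke absoluteness. Since $V[G_\beta]\models\mathrm{CH}$, there are only $\aleph_1$ Borel codes there, hence at most $\aleph_1$ many $G$-independent closed sets $\{B_i:i<\aleph_1\}$ coded in $V[G_\beta]$; by the previous paragraph these cover all of $2^\omega$ in $V[G_{\omega_2}]$. Being $G$-independent, i.e.\ $(B_i\times B_i)\cap G\subseteq\Delta$, is a $\boldsymbol{\Pi}^1_1$ statement in the codes of $B_i$ and of the closed set $G$, and is therefore absolute between $V[G_\beta]$ and $V[G_{\omega_2}]$ by Mostowski absoluteness; so each $B_i$ remains $G$-independent in the final model. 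The induced colouring then witnesses $\chi_{\mathrm{B}}(G)\le\aleph_1<2^{\aleph_0}$, which is the asserted smallness. The main obstacle I anticipate is the reflection step: one must verify that the tail of the countable support iteration is genuinely forced to be a length-$\omega_2$ Matet iteration over $V[G_\beta]$, so that Theorem \ref{Matet vs. Silver} can be restarted there, and that ``no quasi-generics'' is read as ``every real sits in a ground-model-coded independent set'' rather than as a statement about the generic reals alone. Granting this, the counting and absoluteness steps are routine.
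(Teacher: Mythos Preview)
The paper states this corollary without proof, so there is nothing to compare against directly; it is evidently meant to be an immediate consequence of Theorem~\ref{Matet vs. Silver}. Your argument is the correct and standard way to extract the corollary: reflect the code of $G$ to an intermediate stage $V[G_\beta]$, restart the theorem over that model to see that every real lies in a $G$-independent closed set coded there, and count using CH at stage $\beta$. The absoluteness and bookkeeping points you raise are all routine for countable support proper iterations, and the interpretation of ``no quasi-generics'' as ``every real sits in a ground-model-coded independent Borel set'' is the intended one (this is exactly what the proof of Theorem~\ref{Matet vs. Silver} delivers). One minor remark: the corollary as stated omits ``closed'', but Theorem~\ref{Matet vs. Silver} only treats closed locally countable graphs, so you are right to read it that way.
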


\begin{cor}\label{cor:matetvssilver}
$\boldsymbol{\Delta}^{1}_{2}(\mathbb{V})$ doesn't hold in $V^{\mathbb{T}_{\omega_{2}}}$.
\end{cor}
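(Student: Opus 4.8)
The plan is to derive the failure of $\boldsymbol{\Delta}^1_2(\mathbb{V})$ from Theorem \ref{Matet vs. Silver} by way of the standard transcendence characterisation of Silver regularity. Recall (this is Ikegami's characterisation for strongly arboreal Suslin proper forcings, which covers Silver forcing $\mathbb{V}$) that $\boldsymbol{\Delta}^1_2(\mathbb{V})$ holds if and only if for every real $r$ there is a $\mathbb{V}$-quasi-generic real over $L[r]$, i.e.\ a real avoiding every Borel set coded in $L[r]$ that lies in the Silver ideal $v_0$, the $\sigma$-ideal of Borel sets containing no Silver cube. So it suffices to produce a single real $r$ over which no such quasi-generic exists; equivalently, a real $r$ for which the $L[r]$-coded members of $v_0$ cover $2^\omega$ in $V^{\mathbb{T}_{\omega_2}}$. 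As is usual for a consistency statement of this kind, I would take the ground model to satisfy $V=L$ (or at least GCH, coding the relevant parameters into $r$).

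The next step is to supply a concrete bridge between $v_0$ and the closed locally countable graphs treated in Theorem \ref{Matet vs. Silver}. I would use the Hamming graph $H$ on $2^\omega$, where $x \mathrel{H} y$ iff $x$ and $y$ differ in exactly one coordinate. A routine check shows $H$ is locally countable (each $x$ has the $\aleph_0$ neighbours obtained by flipping a single bit) and closed in $(2^\omega)^2\setminus\Delta$ (a convergent sequence of single-coordinate differences either stabilises at one coordinate or collapses onto $\Delta$). The point is the easy inclusion
\[
\{\,B\subseteq 2^\omega : B \text{ Borel and } H\text{-independent}\,\}\subseteq v_0,
\]
since any Silver cube $[p]$ contains two points differing in exactly one free coordinate, an $H$-edge, and so is never $H$-independent; thus a Borel $H$-independent set contains no Silver cube.

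With these in place I would conclude as follows. Applying Theorem \ref{Matet vs. Silver} to $G=H$, every real of $V^{\mathbb{T}_{\omega_2}}$ lies in a ground-model Borel $H$-independent set, hence (with $V=L$ and $H$ lightface) in an $L$-coded member of $v_0$. Consequently the $L$-coded members of $v_0$ cover $2^\omega$, so no $\mathbb{V}$-quasi-generic real over $L$ exists. Taking $r=0$ in the characterisation, $\boldsymbol{\Delta}^1_2(\mathbb{V})$ fails, which is precisely the assertion. The same conclusion can be read off the preceding corollary: it yields $\chi_{\mathrm{B}}(H)<2^{\aleph_0}$, i.e.\ a covering of $2^\omega$ by fewer than continuum many Borel $H$-independent, hence $v_0$-null, sets.

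The main obstacle is the bookkeeping needed to collapse the ``ground-model coded'' of Theorem \ref{Matet vs. Silver} into ``coded in $L[r]$ for one fixed $r$'', so that the transcendence characterisation may be invoked with a single witness. Taking the ground model to be $L$ and using the lightface graph $H$ disposes of this cleanly; more generally one argues by reflection, noting that every real of $V^{\mathbb{T}_{\omega_2}}$ appears at some stage $\alpha<\omega_2$ and that the tail iteration adds no quasi-generic for $H$ over $V^{\mathbb{T}_\alpha}$. The one genuinely external ingredient is the transcendence characterisation itself, which I would cite rather than reprove; note that I only use its easy direction together with the trivial inclusion of $H$-independent Borel sets into $v_0$, so no identification of $v_0$ with the ideal generated by $H$-colourings is required.
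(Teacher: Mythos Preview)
Your proposal is correct and follows essentially the same route as the paper: the paper's one-line proof simply invokes that $G_1$ (which is your Hamming graph $H$) is closed and locally countable, implicitly relying on exactly the connection you spell out---that $H$-independent Borel sets lie in the Silver ideal $v_0$, so the absence of quasi-generics from Theorem~\ref{Matet vs. Silver} translates via the transcendence characterisation into the failure of $\boldsymbol{\Delta}^1_2(\mathbb{V})$. Your discussion of the bookkeeping (taking the ground model to be $L$, citing Ikegami's characterisation) fills in details the paper leaves tacit.
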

\begin{proof}
    The reason being that $G_{1}$ is a closed locally countable graph.
\end{proof}

\section{Separating Willow and Matet regularities}
In this section we are going to focus on separation of regularity properties. In particular we are going to show that in the model generated by the $\omega_{2}$ iteration of Willow tree forcing $\boldsymbol{\Delta}^1_2(\mathbb{T})$ does not hold.

\begin{thm}
    $\mathbb{W}$ is $\omega^{\omega}$ bounding.
\end{thm}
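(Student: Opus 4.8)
The plan is to prove this by a fusion argument, in the same spirit as the Matet arguments above and the classical proof that Sacks forcing is $\omega^\omega$-bounding. The decisive structural feature I intend to exploit is that a Willow tree condition, like the Matet conditions of the previous sections, admits a sequence of refinement orderings $\langle \le_n : n \in \omega\rangle$ for which the fusion $\bigcap_n p_n$ of a $\le_n$-decreasing sequence is again a condition, and under which only \emph{finitely many} nodes of the tree are ``active'' (controlled) at each stage $n$. It is exactly this finiteness, absent for Miller and Laver forcing, that will let me read off a ground-model bound.

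First I would fix a name $\dot f$ for an element of $\omega^\omega$ together with an arbitrary condition $p \in \mathbb{W}$, and aim to produce $q \le p$ and a ground-model $g \in \omega^\omega$ with $q \Vdash \forall n\,(\dot f(n) \le g(n))$. I would then construct a fusion sequence $\langle p_n : n \in \omega\rangle$ with $p_0 = p$ and $p_{n+1} \le_n p_n$. At stage $n$, let $\sigma_0,\dots,\sigma_{m_n}$ enumerate the finitely many splitting nodes of $p_n$ that are controlled at level $n$. Working one node at a time, for each $i \le m_n$ I would pass to the localisation of the current condition at $\sigma_i$, strengthen it so as to decide the value $\dot f(n)$, say forcing $\dot f(n) = v_i$, and then amalgamate these finitely many strengthenings into a single $p_{n+1} \le_n p_n$. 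Setting $g(n) = \max_{i \le m_n} v_i$ gives the candidate bound.

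Finally I would take the fusion $q = \bigcap_n p_n$, which is a Willow condition by the fusion theorem for $\mathbb{W}$, and argue that $q \Vdash \forall n\,(\dot f(n) \le g(n))$: any condition below $q$ must, for each $n$, run through one of the nodes $\sigma_0,\dots,\sigma_{m_n}$ along which $\dot f(n)$ was decided to a value $\le g(n)$, so $g$ dominates $\dot f$ everywhere. The main obstacle, and the one place where the specific combinatorics of Willow trees genuinely enter, is the amalgamation in the previous paragraph: I must verify that at each fusion level only finitely many nodes need to be controlled, and that the finitely many value-deciding strengthenings recombine into a \emph{single} Willow condition that is still $\le_n$ above $p_n$. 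Establishing this ``finite branching per controlled level, closed under amalgamation'' property for $\mathbb{W}$ is precisely what separates it from the unbounded forcings $\mathbb{M}$ and $\mathbb{L}$, and it is what ultimately yields the uniform dominating function $g$.
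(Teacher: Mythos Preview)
Your proposal is correct and follows essentially the same fusion strategy as the paper. The one point you flag as the ``main obstacle''---amalgamating the finitely many value-deciding strengthenings back into a single Willow condition $\le_n p_n$---is handled in the paper by a \emph{copying} trick: after strengthening one branch above a level-$n$ node to decide $\dot f(n)$, the resulting tree shape above the stem is copied verbatim onto every other level-$n$ node (this is legitimate precisely because Willow trees are uniform above each splitting level), and only then does one pass to the next node; iterating this through the $2^n$ nodes yields $p_{n+1}$.
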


\begin{proof}
    Suppose that we have a willow tree $T$, and $\dot{x}$ is a name for a real in $\omega^{\omega}$. Then, if $\sigma$, is the first splitting node of $T$, there is an extension $q'_{0} \leq T_{\sigma^{\smallfrown}0}$, such that $q'_{0}$, decides $\dot{x}(0)$. Now, let $q'_{1} \leq T_{\sigma^{\smallfrown }1}$, such that $\mathrm{dom}(q'_{0}) = \mathrm{dom}(q'_{1})$ and $(q'_{0})^{-1}\{1\} = (q'_{1})^{-1}\{1\}$. We find an extension $q_{1}$ of $(q_{1})'$, such that it decides $\dot{x}(0)$ too. Let $q_{0}$, be an extension of $q'_{0}$, such that $\mathrm{dom}(q_{0}) = \mathrm{dom}(q_{1})$ and $(q_{0})^{-1}\{1\} = (q_{1})^{-1}\{1\}$. Then, clearly $T_{0} = q_{0} \cup q_{1}$, is a Willow tree. Let, $m_{0} = \mathrm{max}\{\dot{x}_{q_{0}}(0), \dot{x}_{q_{1}}(0)\}$.

    Suppossing, that we have already found $T_{i-1} \leq_{i-2} T_{i-1} \leq_{i-3} .... \leq_{0} T_{0}$, where $T_{0} = T$ and also we have found $(m_{j})_{j \leq i}$.

    Then, we define $T_{i}$ by induction on the lexicographic ordering of $2^{i}$. Suppose that $\sigma_{0}$ is the least element in this ordering. We repeat the procedure described in the above paragraph to obtain $(T_{i-1}\ast\sigma_{0})_{0}$, except for the fact that we demand this time that it decides $\dot{x}(i)$. Now, we copy the tree above the stem of $(T_{i-1}\ast\sigma_{0})_{0}$ to all the $T_{i-1}\ast\sigma_{k}'s$. This gives us $T_{i}^{0}$.

    Supposing that we have already obtained $T_{i}^{m}$, we can determine $T_{i}^{m+1}$, we proceed to determine $(T^{m}_{i}\ast\sigma_{m+1})_{0}$, such that it decides $\dot{x}(i)$, and again copy over the tree above the stem of $(T^{m}_{i}\ast\sigma_{m+1})_{0}$ to all the $T_{i}^{m}\ast\sigma_{k}'s$.

    $T_{i}^{2^{i}}$, is the required $T_{i}$.

    The fusion of all the $T_{i}'s$, that is $\bigcap_{i \in \omega} T_{i}$, is our required condition say $S$, such that $S \Vdash \dot{x} <* g$, where $g(i) = m_{i}$ and $g$ is ground model.
\end{proof}

\begin{thm}\label{unbounded}
    $\boldsymbol{\Delta}^1_2(\mathbb{T}) \implies$ that there is an unbounded real over L[a] for every $a \in \omega^{\omega}$. 
\end{thm}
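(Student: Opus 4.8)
The statement is the \emph{easy} (regularity-implies-transcendence) half of the Solovay-style characterisation of Matet regularity, so my plan is to argue the contrapositive: assuming that for some $a\in\omega^\omega$ there is no unbounded real over $L[a]$ — i.e. $\omega^\omega\cap L[a]$ is dominating — I would produce a $\boldsymbol{\Delta}^1_2(a)$ set that is not Matet-measurable, contradicting $\boldsymbol{\Delta}^1_2(\mathbb{T})$. The first step records the combinatorial input from the earlier sections. Writing $e_x$ for the increasing enumeration of the set coded by $x$, the minimal-degree analysis of Theorem~\ref{minimal degree} (where the generic is recovered by a ground-model continuous map whose decoded real dominates) shows that each ``bounded'' set $C_g=\{x:e_x\le^* g\}$ is Matet-small: below any condition one may thin the blocks so that the generic enumeration outgrows $g$. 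Under the dominating hypothesis the family $\{C_g:g\in\omega^\omega\cap L[a]\}$ then covers $\omega^\omega$.

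Next I would linearise this covering. By the Mansfield--Solovay theorem the $\boldsymbol{\Sigma}^1_2(a)$ set $\bigcup_g C_g=\omega^\omega$ is a union of $\aleph_1$ Borel sets whose codes lie in $L[a]$, and using the canonical $\boldsymbol{\Sigma}^1_2(a)$-good wellorder of $L[a]$ I may arrange these as increasing, cofinal, Matet-small sets $\langle C_\xi:\xi<\omega_1\rangle$. Define the rank $\rho(x)=\min\{\xi:x\in C_\xi\}$; this is total by the covering and is $\boldsymbol{\Delta}^1_2(a)$-computable from the wellorder, since both $\rho(x)\le\xi$ and $\rho(x)>\xi$ are $\boldsymbol{\Sigma}^1_2(a)$. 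The candidate counterexample is $A=\{x:\rho(x)\text{ is an even ordinal}\}$, which is $\boldsymbol{\Delta}^1_2(a)$.

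The crux is to show $A$ is not Matet-measurable, i.e.\ that every Matet condition $(s,B)$ has branches of even and of odd rank, so that no condition can be homogeneous. That $\rho$ is unbounded on $[(s,B)]$ follows from the $\sigma$-completeness of the Matet ideal: an initial segment $\{C_\xi:\xi<\xi_0\}$ with $\xi_0<\omega_1$ is a countable union of Matet-small sets, hence is contained in a single $C_h$ for any $\le^*$-bound $h$ of the corresponding functions, and $[(s,B)]\subseteq C_h$ would contradict positivity of the condition. Promoting ``unbounded rank'' to ``both parities occur'' is the genuine difficulty, since a cofinal subset of $\omega_1$ need not meet both parities; I would instead run a fusion along $(s,B)$ — in the style of Section~2 and of Theorem~\ref{minimal degree} — steering the growth rate of the decoded real $e_x$ so as to land $\rho(x)$ on prescribed successor ordinals of either parity, using that consecutive members of the wellorder become $\le^*$-separated after suitable thinning. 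This fusion step, rather than any complexity bookkeeping, is where I expect the real work to lie.

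Finally I would record the much shorter alternative that bypasses the diagonalisation entirely. As already observed, Matet conditions are Miller trees (with a coarser ordering), so $\boldsymbol{\Delta}^1_2(\mathbb{T})\Rightarrow\boldsymbol{\Delta}^1_2(\mathbb{M})$, which is exactly the relevant implication of the Uniform Regularities Diagram; and $\boldsymbol{\Delta}^1_2(\mathbb{M})$ is classically equivalent to the existence of an unbounded real over $L[a]$ for every $a$. This yields the theorem at once, at the cost of invoking the Miller characterisation, whereas the direct argument above is preferable if one wishes to keep the treatment self-contained and in the combinatorial spirit of the preceding sections.
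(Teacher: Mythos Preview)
Your final paragraph is essentially the paper's proof. The paper observes that Matet conditions, read as trees, are superperfect (Miller) trees, and then simply says that the proof of \cite[6.1]{brendlelwe} for the Miller case goes through verbatim because it relies only on the superperfect structure. Your phrasing---factor through the implication $\boldsymbol{\Delta}^1_2(\mathbb{T})\Rightarrow\boldsymbol{\Delta}^1_2(\mathbb{M})$ from the Uniform Regularities Diagram and then quote the known Miller characterisation---is a minor repackaging of the same one-line observation; the paper chooses to rerun the Brendle--L\"owe proof inside $\mathbb{T}$ rather than invoke the implication, but the content is identical.

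Your longer direct argument is not what the paper does, and it carries the gap you yourself flag. From ``$\rho$ is unbounded on $[(s,B)]$'' one cannot conclude ``both parities of $\rho$ occur on $[(s,B)]$'': the rank $\rho(x)$ is the $<_{L[a]}$-least index of a covering set containing $x$, and there is no continuous or even locally Borel control over this ordinal from the block combinatorics of $(s,B)$, so the proposed fusion ``steering $e_x$ so as to land $\rho(x)$ on a prescribed successor ordinal of either parity'' has no mechanism behind it. The Brendle--L\"owe template avoids parity functions altogether and diagonalises directly against the trees coded in $L[a]$ (using the domination hypothesis to reflect $V$-trees down to $L[a]$-trees); if you want a self-contained argument, that is the route to develop. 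As written, only your short alternative constitutes a proof, and it coincides with the paper's.
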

\begin{proof}
    A simple observation shows that Matet trees are also superperfect trees (Miller trees). Therefore one can directly see that the proof of \cite[6.1]{brendlelwe} works in this case because it relies on the fact that the trees are superperfect and since Matet trees are superperfect, we are done. (should I sketch the proof here?)
\end{proof}
\begin{cor}\label{Matet vs. Silver and Willow}
    $\boldsymbol{\Delta}^1_2(\mathbb{T})$ does not hold $V^{\mathbb{W}_{\omega_{2}}}$ and $V^{\mathbb{V}_{\omega_{2}}}$.
\end{cor}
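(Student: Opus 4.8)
The plan is to read both non-implications off the contrapositive of Theorem~\ref{unbounded} once we know that the two iterations are $\omega^\omega$-bounding over $L$. Taking the ground model $V$ to be $L$ (as is standard for such separations), it suffices to produce a single parameter $a$ over which neither extension contains an unbounded real, since Theorem~\ref{unbounded} asserts that $\boldsymbol{\Delta}^1_2(\mathbb{T})$ forces such a real to exist over $L[a]$ for \emph{every} $a$. I would take $a=\emptyset$, so that $L[a]=L$, and thereby reduce the whole statement to the assertion that every real of $V^{\mathbb{W}_{\omega_{2}}}$ and of $V^{\mathbb{V}_{\omega_{2}}}$ is dominated by a real of $L$.

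First I would record that both single-step forcings are $\omega^\omega$-bounding: for $\mathbb{W}$ this is precisely the theorem established immediately above, and for $\mathbb{V}$ it is the classical fact, already used in the proof of Observation~\ref{obs:diagramcompleteness}, that Silver forcing adds no unbounded real (\cite[Proposition~4.2]{BHL05}). Next I would invoke Shelah's preservation theorem that a countable-support iteration of proper $\omega^\omega$-bounding forcings is again $\omega^\omega$-bounding; since $\mathbb{W}$ and $\mathbb{V}$ are proper tree forcings (with fusion) that are $\omega^\omega$-bounding, both $\mathbb{W}_{\omega_{2}}$ and $\mathbb{V}_{\omega_{2}}$ are $\omega^\omega$-bounding over $L$. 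Hence every real in either extension is dominated by a real of $L=L[\emptyset]$, so neither extension contains a real unbounded over $L[\emptyset]$, and the contrapositive of Theorem~\ref{unbounded} yields that $\boldsymbol{\Delta}^1_2(\mathbb{T})$ fails in both $V^{\mathbb{W}_{\omega_{2}}}$ and $V^{\mathbb{V}_{\omega_{2}}}$.

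The main obstacle is the preservation step: one must verify that $\omega^\omega$-bounding survives the $\omega_{2}$-length countable-support iteration, which is where properness of the individual tree forcings and the fusion structure underlying the single-step bounding argument are genuinely used. Everything else is a formal application of the contrapositive of Theorem~\ref{unbounded} together with the choice $V=L$; in particular, the reduction to the single parameter $a=\emptyset$ is immediate once the bounding of the iteration is in hand.
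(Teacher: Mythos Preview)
Your proposal is correct and follows the same approach as the paper: the paper's one-line proof ``$\omega^{\omega}$ bounding is preserved by iteration'' is exactly the combination you spell out (single-step bounding for $\mathbb{W}$ from the preceding theorem and for $\mathbb{V}$ from \cite{BHL05}, Shelah's preservation under countable-support iteration of proper forcings, and the contrapositive of Theorem~\ref{unbounded} over $L$). You have simply made explicit what the paper leaves implicit.
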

\begin{proof}
   $\omega^{\omega}$ bounding is preserved by iteration. 
\end{proof}

\section{Willow and Matet regularities in the Sacks Model}

\begin{thm}\label{Sacks}
    $\boldsymbol{\Delta}^1_2(\mathbb{T})$ does not hold in the Sacks model.
\end{thm}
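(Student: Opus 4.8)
The plan is to reduce this to Theorem~\ref{unbounded} in exactly the same way that Corollary~\ref{Matet vs. Silver and Willow} handles the Willow and Silver models, the only change being that the bounding iterand is now Sacks forcing $\mathbb{S}$ rather than $\mathbb{W}$ or $\mathbb{V}$. Recall that the Sacks model is the countable support iteration $\mathbb{S}_{\omega_2}$ of length $\omega_2$ over $L$. The crucial external input is that single-step Sacks forcing is $\omega^\omega$-bounding: every $f \in \omega^\omega$ appearing in a Sacks extension is dominated by a ground-model real. This is classical and is obtained by the usual fusion argument giving continuous reading of names on a Sacks tree.

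First I would invoke the preservation theorem that a countable support iteration of proper $\omega^\omega$-bounding forcings is again $\omega^\omega$-bounding. Since each iterand $\mathbb{S}$ is proper and $\omega^\omega$-bounding, the full iteration $\mathbb{S}_{\omega_2}$ is $\omega^\omega$-bounding over $L$. Consequently, in the Sacks model $V^{\mathbb{S}_{\omega_2}}$ every real is dominated by some real lying in $L$; in particular no real of the Sacks model is unbounded over $L$.

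Next I would take the parameter $a = 0$, so that $L[a] = L$. By the previous paragraph there is no unbounded real over $L[a]$ for this $a$. If $\boldsymbol{\Delta}^1_2(\mathbb{T})$ were to hold in the Sacks model, then Theorem~\ref{unbounded} would furnish an unbounded real over $L[a]$ for every $a$, and in particular for this one, a contradiction. Hence $\boldsymbol{\Delta}^1_2(\mathbb{T})$ fails in the Sacks model, as required.

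The only genuine point requiring care is the citation and applicability of the $\omega^\omega$-bounding preservation theorem under countable support iteration, together with the (standard) fact that the single step $\mathbb{S}$ is $\omega^\omega$-bounding; once these are cited the argument is formally identical to Corollary~\ref{Matet vs. Silver and Willow}. I would also flag the bookkeeping subtlety that one must work over $L$ (or at least fix a parameter $a$ whose $L[a]$ captures the relevant ground-model reals), so that the absence of unbounded reals is genuinely an absence over some $L[a]$ with $a$ a real parameter, which is precisely the form needed to contradict Theorem~\ref{unbounded}.
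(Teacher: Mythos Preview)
Your proposal is correct and matches the paper's own proof almost verbatim: the paper simply says the result ``follows from the fact that Sacks forcing doesn't add unbounded reals and Theorem~\ref{unbounded}.'' Your added remarks about preservation of $\omega^\omega$-bounding under countable support iteration and the choice of parameter $a=0$ over $L$ just make explicit what the paper leaves implicit.
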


\begin{proof}
    Follows from the fact that Sacks forcing doesn't add unbounded reals and theorem \ref{unbounded}
\end{proof}

\begin{thm}\label{sacksmain}
    Countable support iteration of length $\omega_{1}$ of Sacks forcing does not add Willow quasi-generics
\end{thm}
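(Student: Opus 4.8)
The plan is to prove the sharper \emph{capturing} statement from which the theorem is immediate: every real $\dot{w}$ appearing in the final iterated extension is forced to lie in some Borel set coded in the ground model $V$ that belongs to the Willow ideal $I_{\mathbb{W}}$, where $B\in I_{\mathbb{W}}$ means that $B$ contains no willow branch set $[T_W]$ (equivalently, every willow tree has a subtree $T_W'$ with $[T_W']\cap B=\emptyset$). A Willow quasi-generic over $V$ is by definition a real avoiding every such ground-model set, so the capturing statement says exactly that no Willow quasi-generic is added. I would first isolate a single-step version over one Sacks factor and then lift it to the countable support iteration by a fusion argument of precisely the flavour carried out for the Matet iteration earlier.

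\textbf{Single step.} Fix a Sacks condition (perfect tree) $p$ and a name $\dot{w}$ for a real. By the continuous reading of names available for Sacks forcing --- obtained by a fusion plus pure-decision argument entirely parallel to Theorems~\ref{pure dec} and~\ref{minimal degree} --- I may pass to $p'\le p$ together with a ground-model continuous $f\colon [p']\to 2^{\omega}$ such that $p'\Vdash \dot{w}=f(x_G)$. The core lemma is then purely descriptive: given a perfect tree $p'$ and a continuous $f$, there is a perfect $q\le p'$ with $f[[q]]\in I_{\mathbb{W}}$, i.e.\ the compact image contains no willow branch set. I would obtain $q$ by a fusion in which, at the $n$-th splitting level, for each pair of currently distinguished nodes I force the two image-subtrees to be separated in the manner dictated by the willow structure (the $q^{-1}\{1\}$-uniformity together with the splitting pattern used in the $\omega^{\omega}$-bounding argument), so that no willow tree can be order- and label-preservingly embedded into $f[[q]]$. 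Closedness of the relevant relation lets each separation be secured on a finite initial segment and therefore survive to the fusion limit; the resulting $q$ forces $\dot{w}\in f[[q]]$ with $f[[q]]$ ground-model and Willow-small.

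\textbf{Iteration.} To pass to the length-$\omega_1$ countable support iteration I would reuse the fusion bookkeeping of the Matet iteration: a fusion sequence $(p_n,F_n,k_n)$ with $\bigcup_n F_n$ covering the support, where at coordinate $\gamma$ one applies the single-step lemma inside the intermediate extension $V^{\mathbb{S}_{\gamma}}$ to a name for the $\gamma$-th coordinate reading of $\dot{w}$. The fusion limit $q$ is a master condition, and the capturing set $B\in V$ is assembled as the limit of the coordinatewise Willow-small images. The preservation-theoretic heart is to check that this $B$ is still Willow-small, i.e.\ that ``there is no Willow quasi-generic'' is preserved along the iteration; this follows from the $\sigma$-ideal closure of $I_{\mathbb{W}}$ together with a reflection argument: were $B\notin I_{\mathbb{W}}$, some willow tree $T_W$ with $[T_W]\subseteq B$ would already appear, with its branches inside $B$, at a countable stage of the construction, contradicting the single-step lemma applied there.

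\textbf{Main obstacle.} The difficulty is twofold. The combinatorial crux is the single-step image lemma: one must genuinely exploit the willow-tree structure --- not merely the fact that $f[[q]]$ is a continuous image of a perfect set, which may itself be perfect --- to guarantee that the image omits \emph{every} willow branch set. The technical crux is the preservation through the $\omega_1$-length countable support iteration, that is, establishing that ``adding no Willow quasi-generic'' is preserved under countable support iteration of proper forcing; as is typical for such preservation theorems, the delicate point arises at limit stages of the fusion, where one must control that the amalgamated captured set remains in the $\sigma$-ideal $I_{\mathbb{W}}$, and it is there that the reflection and bookkeeping have to be set up with the most care.
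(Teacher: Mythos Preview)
Your overall framework---capture $\dot w$ in a ground-model Borel set lying in the Willow ideal---is exactly what the paper does, but you are missing the one concrete combinatorial idea that makes the argument go through, and your iteration strategy diverges from the paper's in a way that leaves a real gap.

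\textbf{The missing key idea.} In the single step you write that at stage $n$ of the fusion you will ``force the two image-subtrees to be separated in the manner dictated by the willow structure''. That is not a proof; it is a promissory note. The paper's specific insight is this: arrange that in the tree $T_r(\dot x)$ of decided initial segments, \emph{no two splitting nodes have the same length}. A willow tree, by definition, has all its splitting concentrated on prescribed levels, so in particular it has many splitting nodes of equal length; hence no willow tree embeds into a tree whose splitting nodes all sit at pairwise distinct heights, and $[T_r(\dot x)]$ is automatically Willow-small. This is what the paper's notion of $(F,\eta)$-\emph{faithful} encodes: for distinct $\sigma,\tau\in\prod_{\gamma\in F}2^{\eta(\gamma)}$ one demands $|\dot x_{q*\sigma}|\neq|\dot x_{q*\tau}|$. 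Without naming this property (or an equivalent one) your ``core lemma'' has no content.

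\textbf{The iteration.} The paper does \emph{not} prove a single-step lemma and then invoke a preservation theorem. It works directly in $\mathbb S_\alpha$: the faithfulness condition is formulated for iteration conditions, the step-up lemma increases $\eta$ at one coordinate $\beta\in F$ while staying $\leq_{(F,\eta)}$-below the previous condition, and a single fusion sequence $(p_n,F_n,\eta_n)$ in $\mathbb S_\alpha$ produces the final $r$ with $T_r(\dot x)$ of the desired shape. Your plan---apply the single-step lemma at each coordinate $\gamma$ inside $V^{\mathbb S_\gamma}$ and then ``assemble'' a ground-model $B$---has two problems. First, the captured set you obtain at coordinate $\gamma$ is coded in $V^{\mathbb S_\gamma}$, not in $V$, so it is not clear what the assembled $B\in V$ is. Second, your reflection step (``were $B\notin I_{\mathbb W}$, some $T_W\subseteq B$ would appear at a countable stage, contradicting the single-step lemma there'') is circular: the single-step lemma tells you nothing about an externally given $B$, only about the particular image set it produces. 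The paper sidesteps all of this by never leaving the iteration and by carrying the pairwise-distinct-heights invariant through the fusion.
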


The trick lies in ensuring that for every condition $p$ $\in$ $\mathbb{S}_{\alpha}$ where $\alpha < \omega_{1}$, one can find an extension $r$ of $p$, such that $T_{r}(\dot{x})$ has all it's splitting levels at different heights. The fact that $T_{r}(\dot{x})$ is Borel will ensure that it is Willow regular, but at the same time for any willow tree $T$, $[T] \nsubseteq T_{r}(\dot{x})$.

We shall here be assuming that there is a groundmodel homeomorphism $h : (2^{\omega})^{supt(q)} \rightarrow T_{p}(\dot{x})$ as outlined in \cite[Lemma 78]{Sacks}.

Now for proving the theorem we shall first of all develop the required terminology:

Given a finite set $F \subseteq \mathrm{supt}(p)$ and $\eta : F \rightarrow \omega$, we say that a condition $q \leq p$ is $(F,\eta)-faithful$ if for any two elements $\sigma$ and $\tau$ of $\Pi_{\gamma \in F} 2^{\eta (\gamma)}$, $|\dot{x}_{q\ast\sigma}| \neq |\dot{x}_{q\ast\tau}|$. 

Also for any two conditions $q$ and $p$ in $\mathbb{S}_{\alpha}$, we say that $q \leq_{(F,\eta)} p$, if for all $\sigma \in \Pi_{\gamma \in F} 2^{\eta (\gamma)}$, $q\ast\sigma$ $=$ $p\ast\sigma$. 

Our goal is to build a sequence $(p_{n}, F_{n}, \eta_{n})$ which satisfies the following properties:

\begin{itemize}
    
    \item $p_{n+1} \leq_{(F_{n},\eta_{n})} p_{n}$
    \item $p_{n}$ is $(F_{n}, \eta_{n})-faithful$
    \item $F_{n} \subseteq F_{n+1}$
    \item $\bigcup_{n \in \omega} F_{n}$ $=$ $\mathrm{supt}(p)$.
    \item $\eta_{n}(m) \leq \eta_{n+1}(m)$ for all $m \in F_{n}$
\end{itemize}

To this end the following lemma plays a crucial role.

\begin{lem}
    Suppose, that $\alpha < \omega_{1}$ is an ordinal, $p$ an $\mathbb{S}_{\alpha}$ condition, $F \subseteq \alpha$ is finite, $\eta : F \rightarrow \omega$, $\eta' : F \rightarrow \omega$ are such that $\eta \restriction F\setminus\{\beta\} = \eta' \restriction F\setminus\{\beta\}$ and $\eta'(\beta) = \eta(\beta) +1$. Moreover let $p$ be $(F,\eta)-faithful$. Then, there exists $q \leq_{(F, \eta)} p$ such that for all $\sigma, \tau \in \Pi_{\gamma \in F}2^{\eta'(\gamma)}$, $|\dot{x}_{q\ast\sigma}| \neq |\dot{x}_{q\ast\tau}|$.
\end{lem}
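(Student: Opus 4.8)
The plan is to prove the lemma by a finite induction over the finite index set $\Pi_{\gamma\in F}2^{\eta'(\gamma)}$, building a $\leq_{(F,\eta)}$-decreasing chain of conditions below $p$ that successively pushes the decided initial segments $\dot x_{q\ast\rho}$ to pairwise distinct lengths. Since $\eta'$ agrees with $\eta$ off $\beta$ and satisfies $\eta'(\beta)=\eta(\beta)+1$, the set $\Pi_{\gamma\in F}2^{\eta'(\gamma)}$ has exactly twice as many elements as $\Pi_{\gamma\in F}2^{\eta(\gamma)}$: each old index $\sigma$ splits into the two children obtained by appending a bit to $\sigma(\beta)$. The $(F,\eta)$-faithfulness of $p$ already furnishes that the lengths $|\dot x_{p\ast\sigma}|$, for $\sigma\in\Pi_{\gamma\in F}2^{\eta(\gamma)}$, are pairwise distinct, so the only task is to repair the finitely many collisions that the doubling at coordinate $\beta$ may create.

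First I would enumerate $\Pi_{\gamma\in F}2^{\eta'(\gamma)}$ as $\rho_0,\dots,\rho_{N-1}$ and set $q_0=p$. Given $q_i$, I treat $\rho_i$ as follows. As in the previous fusion lemmas, I work beneath $q_i\ast\rho_i\restriction\gamma_{\max}$, where $\gamma_{\max}=\max F$, and use that $\dot x$ names a real together with the perfectness of $T_p(\dot x)$ (equivalently, the ground-model homeomorphism $h\colon(2^\omega)^{\mathrm{supt}(q)}\to T_p(\dot x)$ of \cite[Lemma 78]{Sacks}): above the node $q_i\ast\rho_i$ one can always delay the next splitting until a strictly longer initial segment of $\dot x$ has been decided. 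Hence I may pass to $q_{i+1}\leq_{(F,\eta)}q_i$ that prunes only the subtree sitting above $q_i\ast\rho_i$ and arranges
\[
|\dot x_{q_{i+1}\ast\rho_i}|\ >\ \max_{j<N}|\dot x_{q_i\ast\rho_j}|,
\]
so that the freshly assigned height exceeds every height currently present.

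The point that makes the induction close is that the refinement performed at stage $i$ does not disturb the heights attached to the other new nodes. Indeed, any two distinct $\rho,\rho'\in\Pi_{\gamma\in F}2^{\eta'(\gamma)}$ lie in disjoint subtrees above their last common splitting node: either they already differ at some coordinate $\gamma\neq\beta$ or within the first $\eta(\beta)$ bits at coordinate $\beta$, in which case they are separated already at level $\eta$; or they agree up to level $\eta$ and differ exactly in the freshly added $(\eta(\beta)+1)$-st bit at coordinate $\beta$, in which case they separate at the newly exposed splitting node of coordinate $\beta$. In every case a refinement localized above $q_i\ast\rho_i$ leaves $\dot x_{q\ast\rho_j}$ untouched for $j\neq i$. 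Consequently, after all $N$ nodes are processed, the assigned heights are strictly increasing along the processing order and stable thereafter, hence pairwise distinct; taking $q=q_N$, which is $\leq_{(F,\eta)}p$ as a finite composition of such refinements, yields an $(F,\eta')$-faithful condition.

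The main obstacle I anticipate is not the bookkeeping of heights but the iteration itself: I must check that the ``lengthen one branch'' step can be carried out coherently in $\mathbb{S}_\alpha$, that is, that the refinement witnessing a longer decided segment past coordinate $\gamma_{\max}$ is genuinely forced by the conditions chosen at the earlier coordinates, and that these localized prunings assemble into a single condition still $\leq_{(F,\eta)}p$. This is precisely where the homeomorphism $h$ and the perfectness of $T_p(\dot x)$ enter, playing the role that closedness and local countability of $G$ played in the analogous Matet lemma; verifying that distinct indices truly occupy disjoint subtrees of the \emph{iteration} (and not merely of a product) is the technical heart of the argument.
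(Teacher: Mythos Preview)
Your argument is correct and matches the paper's: both build a finite $\leq_{(F,\eta)}$-descending chain below $p$, at each stage strengthening a subtree so as to push the decided length of $\dot x$ past the previously obtained values. The only differences are cosmetic---the paper enumerates the \emph{old} index set $\Pi_{\gamma\in F}2^{\eta(\gamma)}$ and for each $\sigma_i$ treats its two new children at coordinate $\beta$ simultaneously (arranging $|\dot x_{q_{\sigma_i,0}}|<|\dot x_{q_{\sigma_i,1}}|$, both exceeding $|\dot x_{q_{\sigma_i}}|$), whereas you enumerate the doubled set $\Pi_{\gamma\in F}2^{\eta'(\gamma)}$ and process one new node at a time; note also that the splitting coordinate where the work is localized is $\beta$ (the paper writes $\delta$), not $\gamma_{\max}$.
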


\begin{proof}
    Suppose we have an enumeration $\{\sigma_{1},..., \sigma_{m}\}$ of $\Pi_{\gamma \in F}2^{\eta(\gamma)}$. Then we shall inductively build a $\leq_{(F, \eta)}$ decreasing sequence $q_{i}$.

    Suppose that we have already found $q_{i-1}$. Then, we take $q_{\sigma_{i},0}$ and $q_{\sigma_{i},1}$ to be such that  and $q_{i-1} \ast \sigma_{i}\restriction \delta$ forces the following:
\begin{itemize}
    \item $q_{\sigma_{i},k} \leq q(\delta)\ast q(\sigma_{i}^{\smallfrown}k)^{\smallfrown} q\restriction (\delta, \alpha)$
    \item $|\dot{x}_{q_{\sigma_{i},k}}| > |\dot{x}_{q_{\sigma_{i}}}|$
    \item $|\dot{x}_{q_{\sigma_{i},0}}| < |\dot{x}_{q_{\sigma_{i},1}}|$
\end{itemize}

One can now choose a condition $q_{j} \leq_{(F,\eta)} q_{j-1}$ such that $q_{j}\ast\sigma_{j}\restriction\delta \Vdash q_{j}(\delta)\ast\sigma_{j}(\delta)^{\smallfrown}k^{\smallfrown}q_{j}^{\smallfrown}q(\delta,\alpha) = q_{\sigma_{i},k}$. 

Then our required $q$ is simply $q_{m}$.
\end{proof}

 Using the above lemma one can construct a fusion sequence $(p_{n}, F_{n}, \eta_{n})$, such that it's fusion say $r$, is such that $T_{r}(\dot{x})$ is a tree with splitting levels all at different heights. This completes the proof of \ref{sacksmain}.

 \begin{cor}\label{Sacks vs Matet}
     $\boldsymbol{\Delta}^1_2(\mathbb{T})$ does not hold in $V^{\mathbb{S}_{\omega_{2}}}$.
 \end{cor}

 \section{Willow and Matet regularities in the Miller model}

 The idea is to show that \ref{Sacks} holds when Sacks is replaced by Miller.

 \section{$E_{0}$ and Willow}
It is clear that $\boldsymbol{\Delta}^1_2(\mathbb{W})$ implies $\boldsymbol{\Delta}^1_2(E_0)$. In this section we are going to prove that the converse doesn't hold.

The fact that $E_{0}$ forcing adds reals of minimal degree is in fact a direct adaptation of the proof that $V$ adds reals of minimal degree. But an observation of this proof will lead to the following corollary:

\begin{cor}\label{E_0 corr}
    For every $E_0$ tree $p$ there exists $q \leq p$ such that every splitting node $s$ of $q$ is such that $\dot{x}_{q_{s^{\smallfrown}0}} \neq \dot{x}_{q_{s^{\smallfrown}1}}$.
\end{cor}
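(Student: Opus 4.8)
The plan is to extract the separation property directly from the fusion that witnesses minimality, in exact parallel with the construction of Theorem~\ref{minimal degree}. Since $p\Vdash\dot{x}\notin V$, the elementary fact driving everything is that below any condition $r$ there are two extensions forcing incompatible values of $\dot{x}$; equivalently, no guiding real $\dot{x}_{r}$ is ever final. I would also use that $E_0$ trees split uniformly, i.e.\ all nodes of a common splitting level split simultaneously, so that the splitting nodes can be organised level by level and only finitely many of them lie below any fixed finite stem.

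First I would set up a fusion sequence $q = q_{0}\geq_{0} q_{1}\geq_{1} q_{2}\geq_{2}\cdots$ in which the $n$-th step is responsible for all splitting nodes $s$ lying on the $n$-th splitting level of the current tree. Processing these finitely many nodes one at a time, exactly as in the inner induction of Theorem~\ref{minimal degree}, for a splitting node $s$ whose guiding real $\dot{x}_{q_{s}}$ has already been determined I would pass to the two children $s^{\smallfrown}0$ and $s^{\smallfrown}1$ and choose extensions deciding $\dot{x}$ far enough to force $\dot{x}_{q_{s^{\smallfrown}0}}\neq\dot{x}_{q_{s^{\smallfrown}1}}$. This is precisely where the non-ground-model hypothesis enters: were it impossible to separate the two children no matter how far one extends, then $\dot{x}$ would be decided independently of the branch taken at $s$, and iterating this across all splitting nodes would compute $\dot{x}$ from ground-model data, contradicting $p\Vdash\dot{x}\notin V$ (this is the same mechanism used in Theorem~\ref{minimal degree}, where all guiding reals are ground model while $\dot{x}$ is not). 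After separating, I would prune the two subtrees to the chosen extensions and reassemble them using the uniform $E_0$ splitting structure, obtaining $q_{n+1}\leq_{n} q_{n}$ while leaving untouched the separations already achieved at lower levels.

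The fusion $q=\bigcap_{n}q_{n}$ is then an $E_0$ tree refining $p$, and by construction every splitting node $s$ of $q$ survives from some stage at which it was treated, so $\dot{x}_{q_{s^{\smallfrown}0}}\neq\dot{x}_{q_{s^{\smallfrown}1}}$ holds at all of them, which is the assertion of the corollary.

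The step I expect to be the main obstacle is the separation at a single splitting node together with the requirement that the two incompatible extensions be routed respectively below $s^{\smallfrown}0$ and below $s^{\smallfrown}1$: the bare non-ground-model fact only yields two incompatible extensions somewhere below $q_{s}$, and it is the homogeneous splitting geometry of $E_0$ trees, one may copy the subtree above one child onto the other, as in the amalgamation underlying the faithful-condition constructions of the previous sections, that lets me place one incompatible extension under each child while keeping the result a genuine $E_0$ condition.
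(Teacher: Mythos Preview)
Your proposal is correct and is precisely what the paper intends: the paper gives no explicit proof of the corollary at all, saying only that it is ``an observation'' of the (adapted) minimal-degree argument, and your write-up spells out exactly that observation via the fusion with separation at each splitting level. Your identification of the one nontrivial point---that the two incompatible extensions must be placed under the two distinct children, which the $E_0$ copying/homogeneity handles---is apt and is the only thing beyond a literal transcription of Theorem~\ref{minimal degree} that the reader needs to supply.
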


Now the above corollary just points out that every splitting node in $q$ corresponds to a splitting in $T_{q}(\dot{x})$.

We shall from now on assume that $p$ itself satisfies \ref{E_0 corr}.
\begin{thm}
    Single step $E_{0}$ forcing doesn't add $\mathbb{W}$ quasi-generics.
\end{thm}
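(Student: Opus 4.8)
The plan is to mimic the Sacks argument of Theorem~\ref{sacksmain}, exploiting the fact that Willow trees have \emph{uniform} splitting levels: beyond the first splitting level a Willow tree carries at least two splitting nodes sharing a common height. Consequently, if $B=[T']$ is the body of a ground model tree $T'$ whose splitting nodes lie at pairwise distinct heights, then no Willow tree $T$ can satisfy $[T]\subseteq B$; for $[T]\subseteq[T']$ gives $T\subseteq T'$ as pruned trees, so every splitting node of $T$ is a splitting node of $T'$, yet $T$ contributes at least two splitting nodes at one common height, contradicting the distinctness of heights in $T'$. Thus it suffices to produce, for the given name $\dot{x}$ and condition $p$ (which by assumption satisfies Corollary~\ref{E_0 corr}), an extension $q\le p$ forcing that the ground model Borel tree $T_{q}(\dot{x})$ has all of its splitting nodes at pairwise distinct heights; then $T_{q}(\dot{x})$ is coded in the ground model, $q\Vdash\dot{x}\in[T_{q}(\dot{x})]$, and $[T]\nsubseteq[T_{q}(\dot{x})]$ for every Willow tree $T$, witnessing that $\dot{x}$ is not a $\mathbb{W}$-quasi-generic.

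To build such a $q$ I would run the single-coordinate version of the faithful-fusion machinery of Section~5. Since $E_0$-trees have uniform splitting levels, for each $\eta\in\omega$ the nodes at the $\eta$-th splitting level of a condition $r$ are indexed by $2^{\eta}$, and I call $r$ \emph{$\eta$-faithful} if $|\dot{x}_{r\ast\sigma}|\ne|\dot{x}_{r\ast\tau}|$ for all distinct $\sigma,\tau\in 2^{\eta}$; by Corollary~\ref{E_0 corr} each such node splits genuinely in $T_{r}(\dot{x})$ at height $|\dot{x}_{r\ast\sigma}|$, so $\eta$-faithfulness says precisely that the $2^{\eta}$ splitting nodes of $T_{r}(\dot{x})$ at that level occupy distinct heights. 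The heart of the construction is the single-step analogue of the faithfulness lemma used to prove Theorem~\ref{sacksmain}: if $p$ is $\eta$-faithful then there is $q\le_{\eta}p$, where $\le_{\eta}$ freezes the first $\eta$ splitting levels, which is $(\eta+1)$-faithful. Iterating this and taking the $E_0$-fusion of the resulting $\le_{\eta}$-decreasing sequence yields $q\le p$ that is $\eta$-faithful for all $\eta$; since heights strictly increase along the tree, every height then carries at most one splitting node of $T_{q}(\dot{x})$.

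For the lemma I would enumerate the $2^{\eta}$ level-$\eta$ nodes $\sigma_{0},\dots,\sigma_{2^{\eta}-1}$ and process them one at a time, keeping a running record of the finitely many $\dot{x}$-lengths already committed. Given $\sigma_{i}$ and the current condition $q_{i}$, Corollary~\ref{E_0 corr} guarantees $\dot{x}_{q_{i}\ast\sigma_{i}{}^{\smallfrown}0}\ne\dot{x}_{q_{i}\ast\sigma_{i}{}^{\smallfrown}1}$, and because $E_0$ adds a real of minimal degree one may pass to a further extension that decides $\dot{x}$ far enough past this disagreement to force $|\dot{x}_{q_{i+1}\ast\sigma_{i}{}^{\smallfrown}0}|$ and $|\dot{x}_{q_{i+1}\ast\sigma_{i}{}^{\smallfrown}1}|$ to be two fresh values, distinct from one another and above every length recorded so far; one then thins the condition and updates the record. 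After all $2^{\eta}$ nodes are treated the $2^{\eta+1}$ level-$(\eta+1)$ nodes carry pairwise distinct lengths, giving an $(\eta+1)$-faithful $q\le_{\eta}p$.

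I expect the genuine obstacle to be exactly the feature distinguishing $E_0$ from Sacks: the $E_0$-invariance of conditions ties the subtree above a node to the subtree above its flip, and so threatens to force equal $\dot{x}$-lengths at distinct level-$(\eta+1)$ nodes. The resolution I would push on is that, although the two subtrees above a splitting node are $E_0$-isomorphic, the name $\dot{x}$ separates their branches — this is the content of Corollary~\ref{E_0 corr}, that every combinatorial split is reflected by a genuine split of $T_{q}(\dot{x})$ — so the committed lengths need not coincide; and since the amount by which we extend past each disagreement is a free parameter, we can always drive the new lengths above the previously used ones. Verifying that the successive $\le_{\eta}$-extensions remain legitimate $E_0$-conditions, respecting the invariance while still separating the lengths, is the one point requiring more than a routine transcription of the Sacks argument; the rest transfers essentially verbatim.
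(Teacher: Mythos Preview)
The gap is exactly the one you flag in your final paragraph and then dismiss too quickly. The $E_0$-invariance really can force coincident $\dot x$-lengths, and no $\leq_\eta$-extension repairs this. Take $\dot x=\dot g$, the canonical name for the $E_0$-generic: for every condition $q$ one has $T_q(\dot g)=q$, and in an $E_0$-tree the $2^{\eta}$ splitting nodes at the $\eta$-th splitting level all lie at one common height of $2^{<\omega}$. Hence for every $q$ and every $\eta\geq 1$ there are distinct $\sigma,\tau\in 2^{\eta}$ with $|\dot g_{q\ast\sigma}|=|\dot g_{q\ast\tau}|$, so your $\eta$-faithfulness is simply unattainable. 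Your proposed fix---``the amount by which we extend past each disagreement is a free parameter''---ignores that a $\leq_\eta$-extension of an $E_0$-condition acts \emph{uniformly} on all $2^{\eta}$ subtrees: deciding $\dot g$ further above $\sigma_i$ decides it by exactly the same amount above every other $\sigma_j$, so your ``process the $\sigma_i$ one at a time'' scheme never gets off the ground. Corollary~\ref{E_0 corr} delivers distinct \emph{values} at siblings, not distinct \emph{lengths}.

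The paper therefore argues by a dichotomy you have omitted. Either some $q\leq p$ already has the property that in every further extension all $\dot x$-heights are distinct---then $T_q(\dot x)$ has your asymmetric shape and one is done---or coincident heights occur densely below $p$. In the second case the paper runs a different fusion: at each stage it locates two nodes $s_0,s_1$ of equal $\dot x$-height and refines so that the $\dot x$-bit at a fixed earlier position is \emph{flipped} between the two sides (and then copies over to preserve the $E_0$-structure). The resulting $T_q(\dot x)$ may well have uniform splitting levels, but the branching pattern violates the Willow copying constraint, so $[T_q(\dot x)]$ is still $\mathbb W$-small. Your sketch handles only the first horn of this dichotomy; the $E_0$-specific content of the theorem lives entirely in the second.
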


\begin{proof}
  Now, we shall have the following two scenarios:

\begin{itemize}
    \item $\exists q \leq p$ $\forall r \leq q$ and $\forall s,t \in r$
    \[|\dot{x}_{r_{s}}| \neq |\dot{x}_{r_{t}}|\]
    \item $\forall q \leq p$ $\exists r \leq q$ and $\exists s,t \in r$
    \[|\dot{x}_{r_{s}}| = |\dot{x}_{r_{t}}|\]
\end{itemize}

In the first case we just have $T_{q}(\dot{x})$ is very asymmetrical and therefore a $\mathbb{W}$ small set.

In the second case let $s_{0}$ and $s_{1}$ be the minimal splitting nodes above $\mathrm{st}(r)$ such that $|\dot{x}_{s_{0}}| = |
\dot{x}_{s_{1}}|$.

Then, let $i$ be such that there is $r^{i} \leq r_{s_{0}^{\smallfrown}i}$ $1-\dot{x}_{s_{0}}(|\dot{x}_{\mathrm{st}(r)}|) = \dot{x}_{r^{i}}$.

Similarly let $j$ be such that the above procedure can be done for $r_{s_{1}}$ and also let $r^{j} \leq r_{s_{1}}$ be the corresponding extension.

We find then $r'_{0} \leq r^{i}$ and $r'_{1} \leq r^{j}$ be such that $\mathrm{st}(r^{i}) = \mathrm{st}(r^{j})$.

We let $r_{0}$ to be $r'_{0} \cup r'_{1}$. Now if $r_{n-1} \leq_{n-1} r_{n-2} \leq_{n-2} ... r_{0} \leq_{0} r$ have already been defined, then it is easy to notice that one can repeat the above procedure for each of the nodes of the $n^{th}$ splitting level of $r_{n-1}$ and copying over to get $r_{n} \leq_{n} r_{n-1}$. 

The fusion of this sequence say $q$ is easily seen to be such that $T_{q}(\dot{x})$ is $\mathbb{W}$ independent.  
\end{proof}

\bibliographystyle{plain}
\bibliography{bibliography}

\end{document}